\documentclass[10pt]{article}

\usepackage{graphicx}
\usepackage{bbding}
\usepackage{perpage}
\usepackage{mathtools}
\usepackage{calc}
\usepackage{enumerate}
\usepackage{amsmath}
\usepackage{amsxtra}
\usepackage[square,numbers]{natbib}
\usepackage{amssymb}
\usepackage{amsfonts}
\usepackage{amsthm}
\usepackage[sans]{dsfont}
\usepackage{color}
\usepackage{hyperref}
\usepackage{textcomp}
\usepackage{mathrsfs}
\usepackage[margin=1in]{geometry}
\usepackage{array}

\newcommand{\xb}{\xi_0}
\newcommand{\Ub}{\overline{U}}
\newcommand{\Vb}{\overline{V}}

\newcommand{\bO}{\mathcal{O}}
\newcommand{\Ps}{\mathcal{P}}
\newcommand{\Peps}{\Ps_{\ueps}}
\newcommand{\ueps}{\epsilon}

\newcommand{\R}{\mathbb{R}}
\newcommand{\C}{\mathbb{C}}

\newcommand{\Linf}{L^\infty}
\newcommand{\BV}{\operatorname{BV}}
\newcommand{\SBV}{\operatorname{SBV}}

\newcommand{\defm}[1]{{\it #1}}

\newcommand{\topref}[2]{\overset{\text{\eqref{#1}}}{#2}}

\theoremstyle{plain}
\newtheorem{lemma}{Lemma}

\newtheorem{definition}{Definition}

\newtheorem{theorem}{Theorem}
\newtheorem{assumption}{Assumption}

\theoremstyle{remark}
\newtheorem{rem}{Remark}

\title{Steady and self-similar solutions of non-strictly hyperbolic systems of conservation laws\footnote{This material is based upon work partially supported by the
National Science Foundation under Grant No.\ NSF DMS-1054115 and by a Sloan Foundation Research Fellowship.}}
\author{Volker Elling and Joseph Roberts}

\begin{document}

\maketitle

\begin{abstract}
We consider solutions of two-dimensional $m \times m$ systems hyperbolic conservation laws that are constant in time and along rays starting at the origin.  The solutions are assumed to be small $L^\infty$ perturbations of a constant state and entropy admissible, and the system is assumed to be non-strictly hyperbolic with eigenvalues of constant multiplicity.  We show that such a solution, initially assumed bounded, must be a special function of bounded variation, and we determine the possible configuration of waves.  As a corollary, we extend some regularity and uniqueness results for some one-dimensional Riemann problems.

\end{abstract}

\section{Introduction}

\subsection{Outline and motivation}

Consider a 1d system of conservation laws
\[{ U_t + f(U)_x = 0 }\] 
augmented with an \defm{entropy inequality}
\[{ \eta(U)_t + \psi(U)_x \leq 0 }\] 
for an \defm{entropy pair} $(\eta,\psi)$ with convex $\eta$.
Consider 
{\it Riemann problem} initial data
\[{ U(x,t=0) = \begin{cases}
        U_- , & x < 0 \\
        U_+ , & x > 0 
\end{cases} }\] 
The solution is expected to be {\it self-similar}:
\[{ U(x,t) = U(\xi) \ ,\ \xi=\frac{x}{t} }\] 
which reduces the system to
\[{ [f(U)-\xi U]_\xi = U \quad,\quad [\psi(U)-\xi\eta(U)]_\xi \leq \eta(U) }\] 
which, for smooth $U$, is equivalent to the non-divergence form
\begin{alignat}{1} [f_U(U)-\xi I]U_\xi = 0 \quad,\quad [\psi_U(U)-\xi\eta_U(U)]U_\xi \leq 0 \label{eq:nondiv} \end{alignat}
In prior work \cite{elling2012steady} we assume:\\
1. $U$ has values in a closed ball $\Peps\subset\Ps$ of small radius $\epsilon$ around $\Ub\in\R^m$. \\
2. The system is \defm{strictly hyperbolic}: for every $U\in\Peps$, $f_U(U)$ has $m$ real and simple eigenvalues $\lambda^\alpha(U)$
with associated right eigenvectors $r^\alpha(U)$. \\
3. Each eigenvalue is either \defm{linearly degenerate},
\begin{alignat}{1} \forall U\in\Peps: \lambda^\alpha_U(U)r^\alpha(U)=0\quad, \label{eq:lindeg} \end{alignat}
or \defm{genuinely nonlinear}:
\begin{alignat*}{1} \forall U\in\Peps: \lambda^\alpha_U(U)r^\alpha(U)\neq0\quad.  \end{alignat*}
Under these conditions we show, among other results, that \\
1. $U\in\SBV\subset\BV$ (as a function of $\xi$), and \\
2. for $\xi\approx\lambda^\alpha$ where $\lambda^\alpha$ is linearly degenerate, $U$ is constant on each side of a \defm{contact discontinuity}.

A key step of the proof was to show the following is impossible: for linearly degenerate $\lambda^\alpha$ and $I$ a closed interval of \emph{positive} length, 
\begin{alignat}{1} \forall \xi\in I: \lambda^\alpha(U(\xi)) = \xi\quad. \label{eq:xilam} \end{alignat}
This is straightforward if we already know $U\in\BV$: 
then $U$ is differentiable in at least one\footnote{in fact almost everywhere} $\xi\in I$, so 
\begin{alignat*}{1} 0 
    &\topref{eq:nondiv}{=} 
    [f_U(U(\xi))-\xi I]U_\xi(\xi) 
    \\&\topref{eq:xilam}{=}
    [f_U(U(\xi))-\lambda^\alpha(U(\xi))I] U_\xi(\xi)
    \\
    &\Rightarrow\quad
    U_\xi(\xi) \parallel r^\alpha(U(\xi))
    \\
    &\topref{eq:lindeg}{\Rightarrow}\quad
    \lambda^\alpha_U(U(\xi))U_\xi(\xi) = 0
\end{alignat*}
But taking $\partial_\xi$ of \eqref{eq:xilam} yields a direct contradiction: 
\[{ \lambda^\alpha_U(U(\xi))U_\xi(\xi) = 1 }\] 
(There is no contradiction if $I$ is a point (so that $\partial_\xi$ cannot be taken), which corresponds to the familiar case of a contact discontinuity.)

The obstacle was to \emph{prove} --- not assume --- that $U\in\BV$, merely starting from $U\in\Linf$ which does not imply differentiability. 
A key ingredient is a result of Saks \cite{saks1937theory}: for an arbitrary \defm{scalar} $g:I\rightarrow\R$ 
there is a sequence $(\xi_n)\subset I$ converging to $\xi_0$ so that 
$U_{|D}$, $U$ restricted to $D=\{\xi_0\}\cup S$, is differentiable in $\xi_0$. 

Important systems like the 2d steady non-isentropic Euler equations 
with supersonic background state $\Ub$ (where one space direction acts as time $t$ and the other as $x$, see Appendix B)
have a linearly degenerate eigenvalue of multiplicity $p\geq2$ (physically corresponding to contact discontinuities that have, 
in addition to tangential velocity jumps, entropy jumps as well). Including passively transported $y,z$ velocities yields even higher
multiplicities $p$.
While many results for strictly hyperbolic systems generalize to $p\geq 2$ in relatively straightforward ways, this is \emph{not} the case here. 
Reason: Saks result is \emph{not} true for $g:I\rightarrow\R^p$ with $p\geq 2$. In this case, \cite{MR2921865}
shows $C^{0,\beta}$ continuity of $U_{|D}$ in $\xi_0$ with \emph{optimal} exponent $\beta=\frac1p<1$. 

\subsection{Related work}

There has been some related work on determining the regularity of weak solutions.   Though $BV$ is the standard setting for one-dimensional systems of conservation laws, a result due to Rauch \cite{MR859822} shows that $BV$ is not appropriate for unsteady multi-dimensional problems. An example of a one-dimensional $SBV$ \footnote{ The Lebesgue decomposition expresses a BV function as the sum of an absolutely continuous function, a singular (differentiable almost everywhere with zero derivative) continuous function (referred to as the \emph{Cantor part}), and a jump (saltus) function.  A special function of bounded variation (SBV) has vanishing Cantor part.} regularity result is \cite{ambrosio2004note}, in which  Ambrosio and De Lellis showed that (not self-similar) $L^\infty$ entropy solutions to \emph{scalar} one-dimensional conservation laws are special functions of bounded variation (as a function of $x$, for all but countably many values of $t$).  In   \cite{MR2433867}, Dafermos showed that $BV$ self-similar solutions to one-dimensional systems with genuinely nonlinear fields are $SBV$ (without reference to entropy). In \cite{bianchini2012sbv}, Bianchini and Caravenna showed that $BV$ entropy solutions to one-dimensional systems with genuinely nonlinear fields are $SBV$ for all but countably many values of $t$.  Though we require self-similarity and a smallness assumption, the fact that we can treat systems with degenerate fields and only assume $L^\infty$ is interesting.

\subsection{Overview}

In this paper we extend the results in \cite{elling2012steady} to non-strictly hyperbolic systems with constant multiplicity.  In Appendix \ref{eul}, we verify that the required assumptions are satisfied for the full Euler equations.  Though the regularity of steady and self-similar solutions of full Euler was proved by the second author in \cite{Roberts:2012fk}, that study was not limited to small perturbations.  Therefore, there was less that could be said regarding the structure of solutions, since strong shocks and subsonic flows were allowed --- there was no notion of ``sectors'' (defined in Section \ref{sect}) in which we know all waves of a given family must occur.  That analysis used the algebraic properties of the full Euler equations instead of the implicit function theorem, and in this paper we treat general systems as well as verify that the full Euler equations fit into this perturbative framework in order to get more structural information than was available in \cite{Roberts:2012fk}.

\section{Physical Systems and Entropy Solutions}
A \emph{system of two-dimensional conservation laws} is a system of nonlinear partial differential equations of the form
\begin{equation} \label{2dlaw}
U_t + f^x(U)_x + f^y(U)_y = 0.
\end{equation}
The unknown $U(t,x,y)  = \big(U^1(t,x,y), U^2(t,x,y), ..., U^m(t,x,y)\big)$ is a function from $\R_+ \times \R^2$ to  $\Ps \subset \R^m$, the individual components $U^\alpha, \alpha = 1,...,m$ are called the \emph{conserved quantities}, the set $\Ps$ is called the \emph{phase space} of physically allowed values, and the smooth functions $f^x, f^y : \Ps \rightarrow \R^m$ are called the \emph{horizontal} and \emph{vertical flux functions}, respectively.  (Like $U$, $f^x$ and $f^y$ are each column vectors with components $f^{x\alpha}, f^{y\alpha}, \alpha = 1,...,m$.)

Smooth functions $\eta, \psi^x, \psi^y : \Ps \rightarrow \R$ are an \emph{entropy-entropy flux pair} for the system \eqref{2dlaw} if, for all $U \in \Ps$,
\begin{align}
\eta_{UU}& \text{ is positive definite}, \label{pd} \\
\psi^x_U &= \eta_U f^x_U, \text{ and} \label{eefh}\\
\psi^y_U &= \eta_U f^y_U. \label{eefv}
\end{align}
\begin{definition}
A \emph{physical} system is a choice of conserved quantities, phase space, flux functions, and entropy-entropy flux pair as described above.
\end{definition}

\begin{definition}
$U \in L^\infty(\R_+ \times \R^2 ; \Ps)$ is a \emph{weak solution} to \eqref{2dlaw} if for any test function $\Phi = (\Phi^1 \, \Phi^2 ... \Phi^m) \in C^\infty_c (R_+ \times \R^2; \R^m)$
\begin{equation}
- \int _{\R_+ \times \R^2} \Phi_t U + \Phi_x f^x(U) + \Phi_y f^y(U) d(t,x,y) = 0. \label{integralform}
\end{equation}
\end{definition}
Unfortunately, weak solutions are not unique for given initial data.  Therefore, we need an \emph{admissibility criterion} for weak solutions.  In this case, the appropriate requirement is that
\begin{equation} \label{ent}
-\int_{\R_+ \times \R^2} \Theta_t \eta(U) + \Theta_x \psi^x(U) + \Theta_y \psi^y(U) \, \, d(t,x,y) \leq 0,
\end{equation}
for any \emph{non-negative} test function $\Theta \in C^\infty_c (\R_+ \times \R^2; \R)$.
This is what is usually referred to as the \emph{integral form} of the differential inequality
\begin{equation}
\eta(U)_t + \psi^x(U)_x + \psi^y(U)_y \leq 0.
\end{equation}

\begin{definition}
$U \in L^\infty(\R_+ \times \R^2 ; \R^m)$ is an \emph{entropy solution} to a physical system \eqref{2dlaw} if it is a weak solution that also satisfies \eqref{ent} for all non-negative test functions $\Theta$.
\end{definition}

\section{Steady and Self-Similar Solutions} \label{section:stss}

We are interested in entropy solutions that are steady in time and constant on rays emanating from the origin.  Using the same arguments as in \cite{elling2012steady}, we have the following.
\begin{definition}
A \emph{steady and self-similar entropy solution} $U \in L^\infty$ to a physical system \eqref{2dlaw} satisfies, in the sense of distributions,
\begin{align} \label{weakformU}
\left\{ \begin{array}{ll} \big( f^y(U) - \xi f^x(U)\big)_\xi + f^x(U) = 0, & \\ \big( \psi^y(U) - \xi \psi^x(U)\big)_\xi + \psi^x(U) \leq 0, & x > 0 \\
 \big( \psi^y(U) - \xi \psi^x(U)\big)_\xi + \psi^x(U) \geq 0, & x < 0 \end{array}. \right.
\end{align}

\medskip

\noindent (Once either the right or left half plane is chosen, $U$ is only a function of $\xi = y/x$.)
\end{definition}

\begin{rem}
We will later justify why we can ignore the case of $x=0$ without loss of generality.
\end{rem}

For the remainder of this chapter, we assume the following.

\begin{assumption}
The system of conservation laws under consideration, \eqref{2dlaw}, is a physical system, and all solutions considered are steady and self-similar entropy solutions.
\end{assumption}

\section{Smallness and Intuition} \label{intuit}
Many of our results require the implicit function theorem, and therefore a smallness assumption.  To that end we assume that our phase space $\Ps$ is a small neighborhood of a constant background state $\Ub$, and rename it $\Peps$.  

\begin{assumption}
The phase space of allowed values for the conserved quantities is of the form
\begin{equation}
\Peps := \Big\{ U \in \R^m \Big| |U-\Ub| \leq \ueps \Big\},
\end{equation}
for some small $\ueps > 0$.  Thus the solutions we consider satisfy
\begin{equation}
||U(\cdot)-\Ub||_{L^\infty} \leq \ueps.
\end{equation}
\end{assumption}    

We will reduce $\ueps$ as necessary throughout, but only finitely many times.

Recall that we are not assuming any regularity of the entropy solution $U(\cdot)$ --- only that it is bounded.  Therefore we cannot expect it a priori to be differentiable anywhere.  We are not assuming it is of bounded variation either, and so we cannot even analyze its derivative in the sense of measures (or even talk about left or right limits).  However, if we look at the differentiated form of the equations anyway, we obtain from the first line of \eqref{weakformU}
\begin{equation}
\big( f^y_U - \xi f^x_U \big)U_\xi = 0.
\end{equation}
Therefore, if on some interval $\xi$ is not a generalized eigenvalue of the matrix pair $\Big(f^x_U\big(U(\xi)\big),f^y_U\big(U(\xi)\big)\Big)$, $U_\xi = 0$ and therefore $U$ is constant.  If instead there is some interval of $\xi$ on which $\xi$ is an eigenvalue, then $U_\xi$ must be the associated eigenvector, which is precisely the case of a \emph{simple wave} for the steady problem.  Either way, the smallness assumption on the phase space suggests that when $\xi$ is not close to a generalized eigenvalue of the matrix pair $\big(f^x_U(\Ub),f^y_U(\Ub)\big)$, $U$ must be constant.

Shocks and contact discontinuities are also possible, but the smallness assumption and standard facts about conservation laws suggests that these waves also occur near the eigenvalues evaluated at the background state $\Ub$, since the phase space is a  small neighborhood of $\Ub$.  

All together, the differentiated form suggests that all interesting behavior must occur near the eigenvalues of the background state.  So even if we were allowed to use the differentiated form it would be important to analyze the characteristic behavior of our system, which is the next step.

\section{Pointwise Information}
It is cumbersome to work with the integral form of a conservation law, so we instead investigate what pointwise information we can derive from it.  Recall that we are assuming that $U \in L^\infty$.  Rearranging the first line of \eqref{weakformU} yields
\begin{equation}
\big(f^y(U)-\xi f^x(U)\big)_\xi = -f^x(U).
\end{equation}
Therefore, the quantity
\begin{equation}
\big(f^y(U)-\xi f^x(U)\big)
\end{equation}
has a distributional derivative that is $L^\infty$ (since $f^x$ is smooth on $\Ps$), and is therefore almost everywhere equal to a Lipschitz continuous function.  Therefore, the fundamental theorem of calculus asserts that

\begin{equation}
\begin{split}
\Big(f^y\big(U(\xi_2)\big)-\xi_2 f^x\big(U(\xi_2)\big)\Big)&-\Big(f^y\big(U(\xi_1)\big)-\xi_1 f^x\big(U(\xi_1)\big)\Big) \\ &= -\int_{\xi_1}^{\xi_2} f^x\big(U(\eta)\big) \, \, d\eta , \text{ for a.e. } \xi_1, \xi_2. 
\end{split}
\label{pointlaw}
\end{equation}

Similarly, the second line of \eqref{weakformU} shows that the distributional derivative of
\begin{equation}\label{nonposdist}
\Big(\psi^y\big(U(\xi)\big) - \xi \psi^x\big(U(\xi)\big)\Big) + \int_{\overline{\xi}}^\xi \psi^x\big(U(\eta)\big) \, \, d\eta
\end{equation}
is a non-positive distribution, and therefore a non-positive measure.  Therefore, there is a version of \eqref{nonposdist} that is a non-increasing function of bounded variation, and rearranging this statement we obtain

\begin{equation}
\begin{split}
\Big(\psi^y\big(U(\xi_2)\big)&-\xi_2 \psi^x\big(U(\xi_2)\big)\Big)-\Big(\psi^y\big(U(\xi_1)\big)-\xi_1 \psi^x\big(U(\xi_1)\big)\Big) \\ &\leq -\int_{\xi_1}^{\xi_2} \psi^x\big(U(\eta)\big) \, \, d\eta , \text{ for } x > 0 \text{ and a.e. } \xi_1 < \xi_2 .
\end{split}
\label{pointentf}
\end{equation}

Similarly, we can obtain
\begin{equation}
\begin{split}
\Big(\psi^y\big(U(\xi_2)\big)&-\xi_2 \psi^x\big(U(\xi_2)\big)\Big)-\Big(\psi^y\big(U(\xi_1)\big)-\xi_1 \psi^x\big(U(\xi_1)\big)\Big) \\ &\geq -\int_{\xi_1}^{\xi_2} \psi^x\big(U(\eta)\big) \, \, d\eta , \text{ for } x < 0 \text{ and a.e. } \xi_1 < \xi_2 .
\end{split}
\label{pointentb}
\end{equation}
We now fix a version of $U$ so that these equations and inequalities hold \emph{for all} $\xi_1 < \xi_2$, as discussed in Section 7 of \cite{elling2012steady}, so that

\begin{equation} \label{everywherepoint}
\left\{ \begin{array}{*2{>{\displaystyle}l}}
\big(f^y(U)-\xi f^x(U)\big) \Big|_{\xi_1}^{\xi_2} = -\int_{\xi_1}^{\xi_2} f^x\big(U(\eta)\big) \, \, d\eta , & \\[10pt] 
\big(\psi^y(U)-\xi \psi^x(U)\big) \Big|_{\xi_1}^{\xi_2} \leq -\int_{\xi_1}^{\xi_2} \psi^x\big(U(\eta)\big) \, \, d\eta, & x > 0, \\[10pt]
\big(\psi^y(U)-\xi \psi^x(U)\big) \Big|_{\xi_1}^{\xi_2} \geq -\int_{\xi_1}^{\xi_2} \psi^x\big(U(\eta)\big) \, \, d\eta, & x < 0,\end{array} \right.
\end{equation}
for all $\xi_1 < \xi_2$.

\section{Hyperbolicity}
Consider the homogeneous polynomial
\begin{equation}
P(x:y) := \det \big(\vec{x} \times \vec{f}_U(\Ub)\big) = \det \big(xf^y_U(\Ub)-yf^x_U(\Ub)\big),
\end{equation}
where $(x:y)$ are homogeneous coordinates on $\R \mathbb{P}^1$, $\vec{x} = (x,y)$, and $\vec{f} = (f^x, f^y)$.   We call the background state $\Ub$ \emph{hyperbolic} if $P(x:y)$ has exactly $m$ roots in $\R \mathbb{P}^1$, counting multiplicity.  If $R$ is any rotation matrix, it is easy to see that $R\vec{x} \times R\vec{f}_U$ = $\vec{x} \times \vec{f}_U$.  Therefore, if $\vec{f}$ is replaced by $R\vec{f}$, the roots of $P$ are rotated by the same amount.  $P$ has degree at most $m$, so there are at most $m$ distinct roots and therefore we can rotate $\vec{f}$ to ensure that $(0:1)$ is not a root of $P$.  Assume without loss of generality that this has been done.

The fact that $(0:1)$ is not a root of $P$ immediately implies that 
\begin{equation}
\det f^x_U(\Ub) \neq 0.
\end{equation}
Now consider the polynomial
\begin{equation}\label{det2}
p(\xi) := \det\big(f^y_U(\Ub)-\xi f^x_U(\Ub)\big).
\end{equation}
From the above discussion, $p$ has $m$ real roots, counting multiplicity, since each of the $m$ roots of $P$ lead to a finite (since none lie on the $y$-axis) value of $\xi$ that is a root of $p$.
A \emph{generalized eigenvalue} $\lambda(U)$ and an associated \emph{generalized eigenvector} $r(U)$ of the matrix pair $\big(f^x_U(U), f^y_U(U)\big)$ satisfy
\begin{equation}
\big(f^y_U(U) - \lambda(U) f^x_U(U)\big)r(U) = 0.
\end{equation}
(Note that the term \textit{generalized eigenvectors} in this context refers to the elements of the kernel of the linear matrix pencil $(-\lambda f^x_U + f^y_U)$, not elements of the kernel of $(-\lambda f^x_U + f^y_U)^k$ for $k >1$ in the context of defective geometric multiplicity.) 

For the remainder, we shall assume that the steady problem is hyperbolic in the following sense.
\begin{definition}
The steady problem associated to the system \eqref{2dlaw}  is called \emph{hyperbolic} on the phase space $\Peps$ if the generalized eigenvalues of the matrix pair $\big(f^x_U(U),f^y_U(U)\big)$ are real, semisimple, and of constant multiplicity on $\Peps$ (thus the sum of the multiplicities equals $m$).  It is called \emph{strictly hyperbolic} if there are $m$ distinct generalized eigenvalues, and \emph{non-strictly hyperbolic with constant multiplicity} otherwise.
\end{definition}

\begin{assumption} \label{hyper}
The steady problem associated to \eqref{2dlaw} is non-strictly hyperbolic with constant multiplicity on the phase space $\Peps$ (which implies $f^x_U(U)$ is non-degenerate for all $U \in \Peps$).
\end{assumption}

As discussed before, the strictly hyperbolic case was treated in \cite{elling2012steady}, and the purpose of this paper is to treat repeated eigenvalues.

\section{Change of Dependent Variables}
In light of Assumption \ref{hyper}, we can perform a change of dependent variables as in \cite{elling2012steady}.  Since $f^x_U(\Ub)$ is non-degenerate, 
\begin{equation}
U \mapsto V := f^x(U)
\end{equation}
is a diffeomorphism on $\Peps$, after reducing $\epsilon$ if necessary.  We then define
\begin{align}
\Vb := f^x(\Ub), \qquad & \qquad f(V) := f^y\big(U(V)\big), \\
e(V) := \psi^x\big(U(V)\big), \qquad & \qquad q(V) := \psi^y\big(U(V)\big).
\end{align}
Then,
\begin{align}
f_V = f^y_U U_V.
\end{align}
Also, we have
\begin{align}\label{evetau}
e_V = \psi^x_U U_V = \eta_U f^x_U U_V = \eta_U,
\end{align}
and
\begin{align}
q_V = \psi^y_U U_V = \eta_U f^y_U U_V = e_V f_V.
\end{align}
Therefore, $e$ and $q$ form an ``entropy-entropy flux pair'' for the flux $f$.  The term is applied loosely here because $e$ is not necessarily convex.  Properties of the entropy are only needed in two instances, and further properties of $e$ will be discussed when they are needed.

Abusing notation, we shall continue to refer to our phase space as $\Peps$, but it will now refer to a small ball around $V$ of permissible values.

The differential equations and entropy inequalities for $V$ are then
\begin{equation} \label{weakformV}
\left\{ \begin{array}{ll} \big( f(V)- \xi V\big)_\xi + V = 0, & \\ \big( q(V) - \xi e(V)\big)_\xi + e(V) \leq 0, & x > 0 \\
 \big( q(V) - \xi e(V) \big)_\xi + e(V) \geq 0, & x < 0 \end{array}. \right.
\end{equation}

The pointwise form is
\begin{equation} \label{everywherepointV}
\left\{ \begin{array}{*2{>{\displaystyle}l}}
\big(f(V)-\xi V\big) \Big|_{\xi_1}^{\xi_2} = -\int_{\xi_1}^{\xi_2} V(\eta) \, \, d\eta , & \\[10pt] 
\big(q(V)-\xi e(V)\big) \Big|_{\xi_1}^{\xi_2} \leq -\int_{\xi_1}^{\xi_2} e\big(V(\eta)\big) \, \, d\eta, & x > 0, \\[10pt]
\big(q(V)-\xi e(V)\big) \Big|_{\xi_1}^{\xi_2} \geq -\int_{\xi_1}^{\xi_2} e\big(V(\eta)\big) \, \, d\eta, & x < 0,\end{array} \right.
\end{equation}
for all $\xi_1 < \xi_2$.

\section{Eigenvalues and Eigenvectors}
Since $f_V = f^y_U U_V = f^y_U (f^x_U)^{-1}$, 
\begin{equation}
\det ( f^y_U - \lambda f^x_U) = 0 \iff \big(\det ( f^y_U - \lambda f^x_U)\big)\big( \det (f^x_U)^{-1}) = 0 \iff \det(f_V - \lambda I) = 0.
\end{equation}
Therefore, the generalized eigenvalues of the matrix pair $(f^x_U,f^y_U)$ are precisely the eigenvalues of the matrix $f_V$.

To that end, define
\begin{equation}
\lambda^1(V)< \lambda^2(V)< \cdots < \lambda^n(V)
\end{equation}
to be the distinct values of $\lambda$ solving
\begin{equation}
\det \big(f_V(V)-\lambda I )\big) = 0.
\end{equation}
If $s$ is a generalized eigenvector with eigenvalue $\lambda$, then
\begin{equation}
0 = (f^y_U-\lambda f^x_U )s =( f^y_U-\lambda f^x_U)(f^x_U)^{-1}f^x_Us=(f_V-\lambda I )f^x_U s,
\end{equation}
which means that $f^x_U s$ is an eigenvector of $f_V$.  Since the generalized eigenvectors span $\R^m$, the eigenvectors of $f_V$ do as well.  Define
\begin{align}
R^\alpha(V) := \mbox{ker }\big(f_V(V)-\lambda^\alpha(V)I\big) , \qquad p_\alpha(V) := \mbox{dim }R^\alpha(V),
\end{align}
so that, under the hyperbolicity assumption 
\begin{equation}
p_\alpha(V) \equiv : p_\alpha \textrm{ on } \Peps,
\end{equation}
and
\begin{equation}
\R^m = \bigoplus_{\alpha=1}^n R^{\alpha}(V) \textrm{ for all } V \in \Peps.
\end{equation}

In the strictly hyperbolic setting, it is relatively easy to prove that if the matrix $f_V$ is a smooth function of $V$, then so are the eigenvalues and eigenvectors (see for example \cite{MR2597943}).  However, the situation is more delicate in the case of repeated eigenvalues --- there are examples in which the eigenvalues and eigenvectors are not as smooth as the matrix.  Fortunately, if the eigenvalues are semisimple and of constant multiplicity, they and the eigenvectors can be shown to be as smooth as the matrix, though the individual eigenvectors are only guaranteed to be locally defined as smooth functions (the \emph{eigenspaces} are smooth when given a suitable topology).  Since we are only considering small perturbations, we can simply reduce $\epsilon$ if necessary and have our right and left eigenvectors defined on all of $\Peps$.

The smoothness of the fluxes, the hyperbolicity assumption, and the discussion in Section \ref{evaluesmooth} in the Appendix allow us to conclude that, for each $\alpha = 1,.., n$,
\begin{equation}
\lambda^\alpha : \Peps \rightarrow \R
\end{equation}
is smooth.  In addition, we have for all $V \in \Peps$ an orthonormal basis for $R^\alpha(V)$:
\begin{equation}
R^\alpha(V) = \textrm{Span } \big\{ r^{\alpha,1}(V), ..., r^{\alpha, p_\alpha}(V) \big\}.
\end{equation}
Reducing $\epsilon$ as necessary, we have that the right and left  eigenvectors 
\begin{equation}
r^{\alpha,i}(V),l^{\alpha,i}(V) : \Peps \rightarrow \R^m
\end{equation}
are smooth, and satisfy the normalization
\begin{align}
|r^{\alpha,i}(V)| &= 1,\\
l^{\alpha,i}(V)r^{\beta,j}(V)&=\delta_{\alpha \beta}\delta_{ij} \quad \forall \alpha, \beta = 1,..,n, i = 1,..,p_\alpha, j = 1,..,p_\beta.
\end{align}

If for some $\alpha$, $p_\alpha = 1$, then we omit the second index of the eigenvector and simply denote it by $r^\alpha$.

\section{Genuine Nonlinearity and Linear Degeneracy, Convexity of $e$} \label{gennonlindeg}
The results in this paper are systems with eigenvalues that are either linearly degenerate on all of $\Peps$ or genuinely nonlinear on all of $\Peps$.  
\begin{definition}
An eigenvalue $\lambda^\alpha$ is \emph{linearly degenerate} if 
\begin{equation}
\lambda_V(V) r^{\alpha, i}(V) \equiv 0 \textrm{ on } \Peps, \quad i = 1,..,p_\alpha.
\end{equation}
\end{definition}
As it turns out, if $p_\alpha \geq 2$, $\lambda^\alpha$ must be linearly degenerate, and there is a nice geometrical structure created by the eigenspaces.  This result is originally due to Boillat, and this theorem and its proof can be found in \cite{serre1999systems}.

\begin{theorem} [Boillat, as in \cite{serre1999systems}] \label{boillat} Suppose the hyperbolicity assumption (Assumption \ref{hyper}) is satisfied.  If an eigenvalue $\lambda^\alpha$ has multiplicity $p_\alpha \geq 2$, then it must be linearly degenerate.  In addition, the affine subspaces $V + R^\alpha(V)$ are the tangent spaces to a family of sub-manifolds of dimension $p_\alpha$. Each integral submanifold can be parameterized by $s \rightarrow W^\alpha(V^-, s),$ with $s$ in $\R^{p_\alpha}$, so that
\begin{align}
W^{\alpha}(V^-,0) &= V^- \\
\mbox{\emph{Span} }\Big\{W^\alpha_{s^i}(V^-,s)\Big\}_{i=1}^{p_\alpha} &= R^{\alpha}(V^-). \label{eq:charsub}
\end{align}

They form a foliation of $\Peps$ called the characteristic foliation associated with $\lambda^{\alpha}$.  
\end{theorem}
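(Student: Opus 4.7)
The plan is to differentiate the eigenvalue relation $f_V(V)\,r^{\alpha,i}(V)=\lambda^\alpha(V)\,r^{\alpha,i}(V)$ along another eigenvector $r^{\alpha,j}$ of the same family, antisymmetrize in $i\leftrightarrow j$ so that the symmetric Hessian $f_{VV}$ drops out, and then read off scalar identities by contraction with the left eigenvectors $l^{\alpha,k}$. A single identity then yields both linear degeneracy and the involutivity needed to invoke Frobenius' theorem.

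Applying $\partial_V$ to $f_V r^{\alpha,i}=\lambda^\alpha r^{\alpha,i}$ and evaluating in the direction $r^{\alpha,j}$ gives, by the product rule,
\begin{equation}
f_{VV}(r^{\alpha,j},r^{\alpha,i})+f_V\bigl((r^{\alpha,i})_V\,r^{\alpha,j}\bigr)=(\lambda^\alpha_V\,r^{\alpha,j})\,r^{\alpha,i}+\lambda^\alpha(r^{\alpha,i})_V\,r^{\alpha,j}.
\end{equation}
Swapping $i\leftrightarrow j$, subtracting, and using the symmetry of $f_{VV}$ in its two vector arguments, the Hessian cancels and a rearrangement produces
\begin{equation}
(f_V-\lambda^\alpha I)\,[r^{\alpha,i},r^{\alpha,j}]=(\lambda^\alpha_V\,r^{\alpha,i})\,r^{\alpha,j}-(\lambda^\alpha_V\,r^{\alpha,j})\,r^{\alpha,i}, \label{eq:boillat-key}
\end{equation}
where $[r^{\alpha,i},r^{\alpha,j}]:=(r^{\alpha,j})_V\,r^{\alpha,i}-(r^{\alpha,i})_V\,r^{\alpha,j}$ is the Lie bracket. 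Since $p_\alpha\geq 2$ we may pick $i\neq j$; contracting \eqref{eq:boillat-key} with $l^{\alpha,i}$ kills the left-hand side because $l^{\alpha,i}(f_V-\lambda^\alpha I)=0$, while biorthogonality $l^{\alpha,i}r^{\alpha,k}=\delta_{ik}$ collapses the right-hand side to $-\lambda^\alpha_V r^{\alpha,j}$. Hence $\lambda^\alpha_V r^{\alpha,j}=0$; contracting instead with $l^{\alpha,j}$ delivers $\lambda^\alpha_V r^{\alpha,i}=0$. Letting $i,j$ range over $\{1,\dots,p_\alpha\}$ proves that $\lambda^\alpha$ is linearly degenerate.

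Substituting the vanishing of $\lambda^\alpha_V r^{\alpha,i}$ and $\lambda^\alpha_V r^{\alpha,j}$ back into \eqref{eq:boillat-key} leaves $(f_V-\lambda^\alpha I)\,[r^{\alpha,i},r^{\alpha,j}]=0$, so the Lie bracket lies in $R^\alpha(V)=\ker(f_V-\lambda^\alpha I)$. The smooth rank-$p_\alpha$ distribution $V\mapsto R^\alpha(V)$ is therefore involutive, and Frobenius' theorem furnishes a foliation of $\Peps$ by $p_\alpha$-dimensional integral submanifolds tangent to it. An explicit parameterization $W^\alpha(V^-,s)$ satisfying \eqref{eq:charsub} is obtained by composing the local flows of $r^{\alpha,1},\dots,r^{\alpha,p_\alpha}$ starting at $V^-$; the order is immaterial up to a smooth reparameterization of $s$, again by involutivity. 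The only genuinely delicate point --- and what I would flag as the main obstacle --- is that the whole argument requires smoothly defined eigenvector \emph{fields} $r^{\alpha,i},l^{\alpha,i}$ on $\Peps$, not merely smooth eigenspaces; this is precisely the issue the preceding section settles by shrinking $\epsilon$, so once that reduction is granted, the computation above is entirely algebraic.
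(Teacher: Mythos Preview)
Your argument is correct and is essentially the classical proof due to Boillat as presented in Serre \cite{serre1999systems}, which is exactly the reference the paper cites in lieu of giving its own proof. There is nothing further to compare: the paper states the theorem and defers the proof to that reference.
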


We now recall the definition of a genuinely nonlinear eigenvalue, which by the previous theorem must have multiplicity 1.
\begin{definition}
An eigenvalue $\lambda^\alpha$ is \emph{genuinely nonlinear} if
\begin{equation}
\lambda_V(V)r^\alpha(V) \neq 0 \textrm{ on } \Peps.
\end{equation}
\end{definition}
We can orient $r^\alpha(V)$ so that, without loss of generality,
\begin{equation}
\lambda_V(V)r^\alpha(V) > 0 \textrm{ on } \Peps.
\end{equation}
We make the following assumption.
\begin{assumption}
Each simple eigenvalue is either genuinely nonlinear or linearly degenerate.  $($Recall that an eigenvalue with multiplicity greater than one \emph{must be} linearly degenerate, so no assumption is necessary in that case.$)$
\end{assumption}

We will, as in \cite{elling2012steady}, need to construct an averaged matrix to continue the analysis.  To ensure the existence of an appropriate averaged matrix, we need the ``entropy'' $e$ to be convex (or concave so that $-e$ is convex).  Since $e$ is the original horizontal entropy flux composed with the inverse of the horizontal flux, and it is the original entropy $\eta$ that is convex, we must carefully inspect whether $e$ is convex or not.

\begin{lemma} \label{econvex}
If $f^x(U)$ has only positive $($negative$)$ eigenvalues, then $e$ is strictly convex $($concave$)$.
\end{lemma}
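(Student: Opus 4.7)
My plan is to compute the Hessian $e_{VV}$ explicitly in terms of $\eta_{UU}$ and $f^x_U$, use the symmetry forced by the entropy--entropy flux compatibility $\psi^x_U = \eta_U f^x_U$ to reduce the question to sign-definiteness of a single symmetric matrix, and then transfer the sign information from the eigenvalues of $f^x_U$ to that matrix by interpreting $\eta_{UU}$ as a symmetrizer.

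First, I would start from \eqref{evetau}, namely $e_V = \eta_U$. Differentiating once more in $V$ and using $U_V = (f^x_U)^{-1}$ (inverse function theorem applied to $V = f^x(U)$) yields
\[
e_{VV} \;=\; \eta_{UU}\,(f^x_U)^{-1},
\]
which is symmetric as the Hessian of a scalar. The invertible change of variable $v = f^x_U w$ then gives $v^{T} e_{VV} v = w^{T} (f^x_U)^{T} \eta_{UU}\, w$, so strict positive definiteness of $e_{VV}$ is equivalent to strict positive definiteness of the matrix
\[
A \;:=\; \eta_{UU}\, f^x_U.
\]
Next I would verify that $A$ is symmetric. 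Differentiating the compatibility relation $\psi^x_U = \eta_U f^x_U$ once more in $U$ produces
\[
\psi^x_{UU} \;=\; \eta_{UU}\, f^x_U \;+\; \sum_{\alpha=1}^m \eta_{U^\alpha}\,(f^{x\alpha})_{UU}.
\]
The left-hand side is a symmetric Hessian, and each $(f^{x\alpha})_{UU}$ is symmetric, so $A = \eta_{UU} f^x_U$ is symmetric, equivalently $(f^x_U)^{T} \eta_{UU} = \eta_{UU}\, f^x_U$.

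The key step is then to deduce the sign of the eigenvalues of $A$ from those of $f^x_U$. The symmetry identity says precisely that $f^x_U$ is self-adjoint with respect to the genuine inner product
$\langle x,y\rangle_\eta := x^{T}\eta_{UU}\, y$
induced by the strictly convex entropy, so $f^x_U$ admits an $\langle\cdot,\cdot\rangle_\eta$-orthonormal basis of real eigenvectors $w_1,\dots,w_m$ with real eigenvalues $\mu_1,\dots,\mu_m$. Expanding an arbitrary $w = \sum_i c_i w_i$ gives
\[
w^{T} A\, w \;=\; \langle w, f^x_U w\rangle_\eta \;=\; \sum_{i=1}^m c_i^{2}\,\mu_i,
\]
which is strictly positive for $w\neq 0$ under the hypothesis $\mu_i > 0$. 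Hence $A$, and consequently $e_{VV}$, is positive definite, proving strict convexity. The case of only negative eigenvalues is identical with signs reversed, giving strict concavity.

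The main subtlety I expect is precisely this last transfer step: for a generic non-symmetric matrix the signs of the eigenvalues carry no direct information about the induced quadratic form, so the argument relies crucially on $\eta_{UU}$ acting as a symmetrizer for $f^x_U$. Everything else is routine chain-rule bookkeeping for the change of variable $U\mapsto V$.
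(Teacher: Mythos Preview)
Your proof is correct. Both your argument and the paper's hinge on the same mechanism---$\eta_{UU}$ acting as a symmetrizer so that sign information about the eigenvalues of $f^x_U$ transfers to a genuinely symmetric matrix---but the packaging differs. The paper rewrites $e_{VV}=\eta_{UU}(f^x_U)^{-1}$ as $(f^x_U)^{-1}=(\eta_{UU})^{-1}e_{VV}$ and then invokes a ready-made result (Proposition~6.1 of \cite{serre2010matrices}: for $H$ symmetric positive definite and $K$ symmetric, $HK$ has the same inertia as $K$) with $H=(\eta_{UU})^{-1}$ and $K=e_{VV}$, using only that $e_{VV}$ is automatically symmetric as a Hessian. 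You instead pass to $A=\eta_{UU}f^x_U$, verify its symmetry directly from the compatibility relation $\psi^x_U=\eta_Uf^x_U$, and then diagonalize $f^x_U$ in the $\eta_{UU}$-inner product to read off definiteness by hand. Your route is self-contained and makes the symmetrization explicit; the paper's is shorter by outsourcing the inertia argument. Substantively they are the same idea.
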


\begin{proof}
We shall use Proposition 6.1 from \cite{serre2010matrices}.  It states that if $H$ is symmetric positive definite, and $K$ is symmetric, then $HK$ is diagonalizable with real eigenvalues.  Moreover, the number of positive (negative) eigenvalues of $K$ equals the number of positive (negative) eigenvalues of $HK$.  First, from \eqref{evetau}, we have that
\begin{align}
e_{VV}=\eta_{UU}U_V,
\end{align}
which we rewrite as
\begin{align}
(f^x_U)^{-1} = (\eta_{UU})^{-1} e_{VV}. 
\end{align}
By assumption, $(\eta_{UU})^{-1}$ is symmetric positive definite, and $e_{VV}$ is symmetric.  Then, applying the proposition, if all the eigenvalues of $(f^x_U)^{-1}$ are positive (negative), then $e_{VV}$ is positive (negative) definite, since a symmetric matrix is positive (negative) definite if and only if its eigenvalues are all positive (negative).  \end{proof}

We finally note that changing variables to $V$ does not affect linear degeneracy or genuine nonlinearity.  As we showed before, if $s$ is a generalized eigenvector of the matrix pair $(f^x_U,f^y_U)$, then $r = f^x_U s$ is an eigenvector of $f_V$, and so
\begin{equation}
\lambda_V r = \lambda_V (f^x_U s) = \lambda_U U_V V_U s = \lambda_U s.
\end{equation}
This does not matter much when discussing general systems, but when checking for linear degeneracy or genuine nonlinearity in a specific system it is more natural to check in terms of the original conserved quantities $U$ or perhaps another choice of state variables.

\section{Averaged Matrix}\label{matravg}

To proceed with the analysis, we need to construct an averaged matrix $\hat{A}$ that is
\begin{itemize}
\item smooth and diagonalizable (with real eigenvalues),
\item satisfies $\hat{A}(V^-,V^+)(V^+-V^-) = f(V^+)-f(V^-)$, and
\item satisfies $\hat{A}(V,V) = f_V(V)$.
\end{itemize}
A commonly used choice that suffices in \cite{elling2012steady} is to use
\begin{equation} \label{naive}
\hat{A}(V^-,V^+) := \int_0^1 f_V\big(V^- + s(V^+-V^-)\big)ds.
\end{equation}
Clearly this satisfies most of the requirements --- but in fact this definition only guarantees a diagonalizable $\hat A$ in the \emph{strictly hyperbolic} case.  This is because the set of matrices with all simple real eigenvalues is open, and we are only interested in $V^\pm \in \Peps$, and so smoothness of the flux guarantees that this averaged matrix is a small perturbation of $f_V(\Vb)$.  However, the set of matrices with repeated real eigenvalues is \emph{not open}, and so in the repeated eigenvalue case this $\hat{A}$ is not guaranteed to be diagonalizable.  

An averaged matrix of this type is often used in numerical computations, and for some specific systems there is a \emph{Roe averaged matrix} available.  It has the special property that it can be defined as
\begin{equation}
\hat{A}(V^-,V^+) = f_V(\hat{V}),
\end{equation}
where $\hat{V}$ is some appropriate averaging of the states $V^-$ and $V^+$.  In this case, diagonalizability is guaranteed from diagonalizability of $f_V$.  (It also has the useful property that expressions for the eigenvalues and eigenvectors of $f_V$ are available, which makes it especially well suited for numerical computations.)  This is often accomplished by doing a line integral in phase space between the two states, but choosing a more sophisticated path than simply the line segment between the two states which allows one to analytically evaluate the integral.  However this is not available for general systems since it is reliant on the algebraic structure of the flux functions, and so we need another approach.

A theorem due to Harten and Lax that appeared in \cite{MR694161} shows that physical systems always possess an averaged matrix with these properties.  Therefore, we need either $e_{VV}$ to be positive definite, or negative definite so that $(-e)$ can function as a convex entropy.   In light of Lemma \ref{econvex}, we will need to assume that the eigenvalues of $f^x_U(\Ub)$ are all the same sign (from which it follows that they will all have the same sign on all of $\Peps$).

\begin{theorem}[Harten, Lax as in \cite{MR694161}]   \label{HL}
Suppose there exists an entropy/entropy-flux pair $(e, q)$ for the flux function $f$ with $e_{VV}$ positive definite.  Then we can define an averaged matrix $\hat{A}(V^\pm)$, such that it is smooth in $V^\pm$, $\hat{A}(V,V)=f_V(V)$, and it is diagonalizable with real eigenvalues for all $V^\pm \in \Peps$.  Most importantly,
\begin{align}
f(V^+)-f(V^-) = \hat{A}(V^-,V^+)(V^+-V^-).
\end{align}
\end{theorem}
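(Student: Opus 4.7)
The plan is to follow the Godunov--Friedrichs--Lax symmetrization idea that underlies the classical Harten--Lax proof. The starting observation is that a convex entropy symmetrizes the system: differentiating the compatibility relation $q_V = e_V f_V$ once more and using equality of mixed partials for $q$ shows that the matrix $e_{VV}\,f_V$ is symmetric, i.e.\ $e_{VV}\,f_V = f_V^{\top} e_{VV}$. Together with the hypothesis that $e_{VV}$ is symmetric positive definite on $\Peps$, this is all the algebraic structure we will need.

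First I would change to \emph{entropy variables} $W := (e_V)^{\top}$. Since $W_V = e_{VV}$ is s.p.d., $V \mapsto W$ is a smooth diffeomorphism on $\Peps$ (shrinking $\ueps$ if necessary). Writing the flux in these coordinates as $g(W) := f(V(W))$, the chain rule gives $g_W = f_V\,(e_{VV})^{-1}$, and the symmetry identity above shows directly that $g_W$ is symmetric.

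Next, in the $W$-variables the standard line-integral average
\[
  \hat{B}(W^-,W^+) \;:=\; \int_0^1 g_W\bigl(W^- + s(W^+-W^-)\bigr)\,ds
\]
is smooth, symmetric (hence diagonalizable with real eigenvalues), satisfies $\hat{B}(W,W) = g_W(W)$, and obeys $\hat{B}(W^+-W^-) = g(W^+)-g(W^-) = f(V^+)-f(V^-)$ by the fundamental theorem of calculus. To convert back to the $V$-variables, write $W^+-W^- = \bar{H}(V^-,V^+)(V^+-V^-)$ with
\[
  \bar{H}(V^-,V^+) \;:=\; \int_0^1 e_{VV}\bigl(V^- + s(V^+-V^-)\bigr)\,ds,
\]
which is s.p.d.\ as an integral of s.p.d.\ matrices over a convex segment in $\Peps$. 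Setting
\[
  \hat{A}(V^-,V^+) \;:=\; \hat{B}\bigl(W(V^-),W(V^+)\bigr)\,\bar{H}(V^-,V^+)
\]
then yields $\hat{A}\,(V^+-V^-) = f(V^+)-f(V^-)$ and $\hat{A}(V,V) = g_W(W)\,e_{VV}(V) = f_V(V)$, with smoothness inherited from smoothness of $f$, $e_{VV}$, and the diffeomorphism $V \mapsto W$.

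The main (and essentially only) nontrivial point is diagonalizability of $\hat{A}$: it is the product of the symmetric matrix $\hat{B}$ and the symmetric positive definite matrix $\bar{H}$. But this is exactly the setting of the Serre proposition already invoked in the proof of Lemma \ref{econvex}: any product $HK$ with $H$ s.p.d.\ and $K$ symmetric is diagonalizable with real eigenvalues. The hard part, if one tried to work directly in $V$-variables, would be averaging $f_V$ in a way that controls its spectrum across jumps that cross the set where eigenvalues coalesce; passing to entropy variables neatly sidesteps this, since symmetry is preserved under convex averaging while diagonalizability by itself is not.
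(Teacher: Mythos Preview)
The paper does not supply its own proof of this theorem; it cites the original Harten--Lax paper \cite{MR694161} and adds only the remark that ``a key idea in the proof is to use a real symmetric square root of the matrix $e_{VV}$.'' Your argument is a correct reconstruction of the standard construction. The passage to entropy variables $W=(e_V)^\top$ is exactly the Godunov symmetrization underlying Harten--Lax; where their presentation typically writes out the square root $\bar H^{1/2}$ explicitly to conjugate $\hat A$ into a symmetric matrix, you obtain the same conclusion by invoking the Serre proposition for products of symmetric and s.p.d.\ matrices---whose proof is itself via that square root. So the two routes coincide up to packaging.

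Two minor points worth tightening. First, Serre's proposition as quoted in Lemma~\ref{econvex} concerns $HK$ with $H$ s.p.d.\ and $K$ symmetric, while your $\hat A=\hat B\,\bar H$ has the factors in the opposite order; this is harmless since $\hat B\,\bar H=\bar H^{-1/2}\bigl(\bar H^{1/2}\hat B\,\bar H^{1/2}\bigr)\bar H^{1/2}$ is similar to a symmetric matrix, but you should say so. Second, the line integral defining $\hat B$ runs along the segment from $W^-$ to $W^+$, which must lie in the image $W(\Peps)$; that image is a smooth diffeomorphic deformation of a ball but not a priori convex, so a word about shrinking $\ueps$ (or enlarging the domain on which $g$ is defined) to keep such segments inside is in order.
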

A key idea in the proof is to use a real symmetric square root of the matrix $e_{VV}$, which is only possible if $e_{VV}$ is symmetric positive definite.
Since we are assuming that at least one eigenvalue has multiplicity greater than one, we need to make the following assumption in order to have an averaged matrix.
\begin{assumption} \label{avgmxexists}
Assume that the eigenvalues of $f^x_U(\Ub)$ are all positive or all negative.
\end{assumption}

We denote the eigenvalues of $\hat{A}(V^\pm)$ as $\hat{\lambda}^{\alpha, i}.$ If for some $\alpha$, $p_\alpha=1$, then from the discussion in Section \ref{evaluesmooth} in the Appendix it follows that $\hat{\lambda}^\alpha$ and $\hat{r}^\alpha$ are smooth functions of $V^\pm$.  If instead $p_\alpha > 1$, we will not have in general that $\hat{\lambda}^{\alpha,1}=...=\hat{\lambda}^{\alpha,p_\alpha}$.   Since the multiplicity of these eigenvalues is not necessarily constant on $\Peps \times \Peps$, we cannot conclude that these eigenvalues and their associated eigenvectors are smooth functions of $V^\pm$.

However, as discussed in Section \ref{evaluesmooth} in the Appendix, the eigenvalues are continuous functions of $V^\pm$.  We define, for $\alpha = 1,..,n$, the $\alpha$-\textit{group} to be 
\begin{align}
\left\{ \hat{\lambda}^{\alpha,1}, \hat{\lambda}^{\alpha,2},..,\hat{\lambda}^{\alpha,p_\alpha} \right\},
\end{align}
which can be continuously labeled so that 
\begin{align}
\hat{\lambda}^{\alpha,1}(V,V) = \hat{\lambda}^{\alpha,2}(V,V)=...=\hat{\lambda}^{\alpha,p_\alpha}(V,V) = \lambda^{\alpha}(V)
\end{align}
for all $V \in \Peps$.

Many examples exist that demonstrate lack of continuity of eigenvectors when the multiplicity of an eigenvalue changes, even for symmetric matrices.  Moreover, the projection operators onto the eigenspaces also can display this behavior --- it is worse than just not being able to smoothly pick bases for these eigenspaces.  However, if we instead consider, as in \cite{kato1995perturbation}, the \textit{total projection} for the $\alpha-$group, then this will be as smooth as $\hat{A}$.  We have the formula
\begin{align}
\hat{P}_\Gamma (V^\pm) = \frac{1}{2 \pi i} \int_\Gamma (zI-\hat{A}(V^\pm))^{-1} dz,
\end{align}
where $P_\Gamma$ is the sum of the projections onto the eigenspaces of all eigenvalues inside some counterclockwise contour $\Gamma$.  By continuity, all eigenvalues in the $\alpha$-group remain close to $\lambda^\alpha(\Vb)$ for all $V^\pm$ in $\Peps$, and so we have for $\alpha=1,..,n$ that
\begin{align}
\hat{P}^\alpha(V^\pm) = \frac{1}{2 \pi i} \int_{|z-\lambda^{\alpha}(\Vb)| = \delta} (zI-\hat{A}(V^\pm))^{-1} dz
\end{align}
is the total projection for the $\alpha$-group, where $\delta$ is small enough so these curves remain distinct for different choices of $\alpha$, and large enough so as to include the entire $\alpha$-group for all $V^\pm \in \Peps$.
Clearly such a $\delta > 0$ can be found for $\epsilon$ sufficiently small.  
We will make use of the following properties of the projections.
\begin{align}
\hat{P}^\alpha \hat{P}^\beta &= \delta_{\alpha \beta} \hat{P}^\alpha,\\
\hat{P}^\alpha \hat{A} &= \hat{A} \hat{P}^\alpha = \hat{P}^\alpha \hat{A} \hat{P}^\alpha,\\
\sum_{\alpha=1}^n \hat{P}^\alpha &=I.
\end{align}
Note that this implies that $\hat{P}^\alpha \R^m$ is an invariant subspace for $\hat{A}$.

If for some $\alpha$, $p_\alpha = 1$, then $\hat \lambda^{\alpha}$ and its associated right eigenvector $\hat r^\alpha$ are smooth functions of $V^\pm$.  In this case we also normalize so that
\begin{align}
\hat r^\alpha(V,V) &= r^\alpha(V),\\
|\hat r^\alpha(V^\pm)| &= 1, \textrm{ for all } \alpha \textrm{ with } p_\alpha = 1.
\end{align}

Under the assumption \ref{avgmxexists}, we can rewrite the pointwise version as (after slight rearrangement)

\begin{equation} \label{everywherepointavg}
\left\{ \begin{array}{*2{>{\displaystyle}l}}
\begin{split} \Big(\hat{A}\big(V(\xi_1),V(\xi_2)\big)-\xi_1 I\Big)&\big(V(\xi_2)-V(\xi_1)\big) \\ &= \int_{\xi_1}^{\xi_2} V(\xi_2)- V(\eta) \, \, d\eta ,  \end{split} & \\[10pt] 
\big(q(V)-\xi e(V)\big) \Big|_{\xi_1}^{\xi_2} \leq -\int_{\xi_1}^{\xi_2} e\big(V(\eta)\big) \, \, d\eta, & x > 0, \\[10pt]
\big(q(V)-\xi e(V)\big) \Big|_{\xi_1}^{\xi_2} \geq -\int_{\xi_1}^{\xi_2} e\big(V(\eta)\big) \, \, d\eta, & x < 0,\end{array} \right.
\end{equation}
for all $\xi_1 < \xi_2$.

\section{Left and Right Sequences}
We now continue as in \cite{elling2012steady}.  Since $V(\cdot)$ is only assumed to be bounded, it is not necessarily piecewise smooth, and and at each point it is not necessarily either continuous, approximately continuous, or discontinuous with an approximate jump discontinuity (these are the hypotheses typically used in deriving the familiar Rankine-Hugoniot jump conditions).  Arbitrary bounded functions do not even possess well defined limits from the left and right, and so we have to be more careful in this $L^\infty$ setting.

We wish to use the pointwise form \eqref{everywherepointavg}, so to that end we define pairs of sequences $(\tilde \xi_k^-), (\tilde \xi_k^+)$ such that $\tilde \xi_k^- < \tilde \xi_k^+$ for all $k$ and $\tilde \xi_k^\pm \rightarrow \xi$.  

Since $\Peps$ is compact, there exists a subsequence $\tilde \xi^+_{j(k)}$ such that $V(\tilde \xi^+_{j(k)}) \rightarrow V^+$.  Similarly there exists a subsequence $\tilde \xi^-_{i(j(k))}$ such that $V(\tilde \xi^-_{i(j(k))}) \rightarrow V^-$.  By construction, taking $\xi^\pm_k := \tilde \xi^\pm_{i(j(k))}$ yields a pair of sequences such that
\begin{equation}
\xi^\pm_k \rightarrow \xi, \qquad V(\xi^\pm_k) \rightarrow V^\pm, \qquad \xi^-_k < \xi^+_k \textrm{ for all } k.
\end{equation}
Understanding that this depends on the pair of sequences chosen, we define
\begin{equation}
[g(V)] := g(V^+)-g(V^-)
\end{equation}
for any function $g$ of $V$.  Define
\begin{equation}
J(g(V);\xi) := \sup |[g(V)]|,
\end{equation}
where the supremum is taken over all such sequences.  Obviously $J(g(V);\xi) = 0 \iff g \circ V$ is continuous at $\xi$.  

Applying \eqref{everywherepointavg} with $\xi_1 = \xi_k^-$ and  $\xi_2 = \xi_k^+$ (this is why we insisted $\xi^-_k < \xi^+_k$ for all $k$) and taking the limit $k \rightarrow \infty$ yields
\begin{equation} \label{lrseqlim}
\left\{ \begin{array}{ll}
\big(\hat{A}(V^-,V^+)-\xi I \big)[V] = 0, & \\
 \left[q(V)\right]-\xi [e(V)] \leq 0, & x > 0, \\
 \left[q(V)\right]-\xi [e(V)] \geq 0, & x < 0,\end{array} \right.
\end{equation}
with the first line being equivalent to
\begin{equation}
[f(V)] - \xi[V] = 0,
\end{equation}
which is the familiar \emph{Rankine-Hugoniot condition}.  Therefore, even in the absence of well defined left and right limits, we can make some sense of limits using these pairs of sequences, and the usual conditions apply to these sequence-dependent $V^\pm$.  

The first line of \eqref{everywherepointavg} implies that, for a given pair of sequences
\begin{equation} \label{RH}
[V] = 0 \qquad \textrm{ or } \qquad \xi = \hat{\lambda}^{\alpha,i} \mbox{ and } [V] \in \ker (\hat{A}(V^\pm) - \hat{\lambda}^{\alpha,i}I).
\end{equation}

\section{Sectors}\label{sect}
We now start to make rigorous statements similar to the intuition developed in Section \ref{intuit}, and present two theorems from \cite{elling2012steady} relying on the existence of $\hat{A}$.
\begin{theorem}[Elling, Roberts \cite{elling2012steady}]
   \label{th:Vconst}
    Suppose $V$ is continuous on an interval $I=]\xi_1,\xi_2[$ and that
    $\xi$ is not an eigenvalue of $f_V\big(V(\xi)\big)$
    for any $\xi\in I$.
    Then $V$ is constant on $I$.
\end{theorem}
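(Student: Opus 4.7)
The plan is to upgrade the mere continuity of $V$ on $I$ to smoothness, using the first line of the pointwise form \eqref{everywherepointV} together with the implicit function theorem, and then differentiate the resulting ODE to conclude.

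First I would introduce the auxiliary function $g(\xi) := f(V(\xi)) - \xi V(\xi)$. The first line of \eqref{everywherepointV} rewrites as
\begin{equation*}
g(\xi_2) - g(\xi_1) = -\int_{\xi_1}^{\xi_2} V(\eta)\,d\eta \qquad\text{for all } \xi_1<\xi_2\text{ in } I.
\end{equation*}
Because $V$ is continuous on $I$, the right-hand side is a $C^1$ function of the endpoints; hence $g$ is itself $C^1$ on $I$ with $g'(\xi) = -V(\xi)$.

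Next I would use the non-eigenvalue hypothesis to bootstrap regularity. Define the smooth map $h(\xi,V) := f(V) - \xi V$ on $I\times\Peps$, so that $h_V(\xi,V) = f_V(V) - \xi I$. Fix any $\xi_\ast \in I$; by hypothesis $\xi_\ast$ is not an eigenvalue of $f_V(V(\xi_\ast))$, so $h_V(\xi_\ast, V(\xi_\ast))$ is invertible. The implicit function theorem, applied to the equation $h(\xi,V) = g(\xi)$, produces a neighborhood $J\subset I$ of $\xi_\ast$ and a $C^1$ map $\phi\colon J\to\Peps$ with $\phi(\xi_\ast)=V(\xi_\ast)$ that is the unique solution to $h(\xi,\phi(\xi))=g(\xi)$ taking values in a small ball around $V(\xi_\ast)$. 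Continuity of $V$ keeps $V(\xi)$ in that small ball for $\xi\in J$ (after shrinking $J$), and $V$ already satisfies $h(\xi,V(\xi)) = g(\xi)$, so uniqueness forces $V\equiv\phi$ on $J$. Since $g$ is $C^1$ and $\phi$ inherits its regularity, this shows $V\in C^1(I)$.

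Having promoted $V$ to $C^1$, I would differentiate the identity $f(V(\xi)) - \xi V(\xi) = g(\xi)$ to obtain
\begin{equation*}
\bigl(f_V(V(\xi)) - \xi I\bigr)\, V'(\xi) - V(\xi) = g'(\xi) = -V(\xi),
\end{equation*}
so $\bigl(f_V(V(\xi)) - \xi I\bigr) V'(\xi) = 0$ for every $\xi\in I$. The non-eigenvalue hypothesis makes the matrix invertible at each $\xi$, forcing $V'\equiv 0$ on $I$, and hence $V$ is constant on $I$.

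The only real obstacle is the regularity upgrade from $C^0$ to $C^1$: once differentiability is in hand the non-eigenvalue hypothesis immediately kills $V'$, while without it one cannot even take the derivative of \eqref{weakformV}. Using $g$ together with the implicit function theorem is what extracts this regularity from the $L^\infty$ setting, and it is the step where the hypothesis ``$\xi$ is not an eigenvalue for any $\xi\in I$'' is genuinely used. Note that this argument needs only $f_V$ itself, not the averaged matrix $\hat{A}$, so the non-strictly hyperbolic setting causes no extra difficulty here.
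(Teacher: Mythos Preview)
Your argument is correct. The paper does not actually prove this theorem: it is quoted from the earlier work \cite{elling2012steady} and stated without proof here, so there is no in-paper proof to compare against. Your bootstrap via $g(\xi)=f(V(\xi))-\xi V(\xi)$, the implicit function theorem applied to $h(\xi,V)-g(\xi)=0$ (which is $C^1$ since $g'=-V$ is continuous), and the final differentiation yielding $(f_V(V(\xi))-\xi I)V'(\xi)=0$ is a clean and self-contained justification; it is also essentially the natural argument one would expect in \cite{elling2012steady}. One small remark: once you know $V\in C^1$ you could iterate (then $g\in C^2$, hence $V\in C^2$, etc.), but as you note this is unnecessary since invertibility of $f_V-\xi I$ already forces $V'\equiv 0$ at the $C^1$ stage.
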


The continuity assumption is stronger than we need, so we proceed without assuming continuity but instead assuming $\xi$ is bounded uniformly away from all eigenvalues of $f_V\big(V(\xi)\big)$.  

\begin{theorem}[Elling, Roberts \cite{elling2012steady}]
    \label{th:Uconst2}%
    Consider an interval $I=\, \, ]\xi_1,\xi_2[$.
    There is a $\delta_s=\delta_s(\ueps)>0$, with
    \begin{equation}
     \delta_s\downarrow 0\quad\text{as}\quad\ueps\downarrow 0,
     \end{equation}
    so that 
    \begin{align}
        \forall\alpha =1,..n, \forall \xi \in I: |\lambda^\alpha(\Vb)-\xi| > \delta_s \label{eq:lamdel}
    \end{align}
    implies $V$ is constant on $I$.
\end{theorem}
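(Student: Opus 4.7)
The plan is to reduce this to Theorem \ref{th:Vconst} by first establishing continuity of $V$ on $I$, and then choosing $\delta_s$ large enough that the hypothesis of that theorem is verified. The smallness of $\ueps$ enters in two places: it makes the eigenvalues of $\hat{A}$ cluster near the eigenvalues of $f_V(\Vb)$, and it keeps the pointwise eigenvalues $\lambda^\alpha(V(\xi))$ near $\lambda^\alpha(\Vb)$. Both of these deviations tend to zero with $\ueps$, so they can be absorbed into $\delta_s$.

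First I would show continuity of $V$ at every $\xi \in I$. For arbitrary $\xi \in I$ and any pair of sequences $(\xi_k^\pm)$ producing limits $V^\pm$ as in the previous section, the Rankine-Hugoniot alternative \eqref{RH} gives either $[V]=0$ or $\xi = \hat{\lambda}^{\alpha,i}(V^-,V^+)$ for some $\alpha, i$. The eigenvalues $\hat\lambda^{\alpha,i}$ may fail to be smooth individually across $\Peps \times \Peps$, but they are continuous (and labeled so that $\hat\lambda^{\alpha,i}(V,V) = \lambda^\alpha(V)$), so on the compact set $\Peps \times \Peps$ there is a modulus $\omega_1(\ueps) \downarrow 0$ with
\begin{equation*}
|\hat\lambda^{\alpha,i}(V^-,V^+) - \lambda^\alpha(\Vb)| \le \omega_1(\ueps) \qquad \forall V^\pm \in \Peps,\ \forall \alpha, i.
\end{equation*}
Similarly, smoothness of $\lambda^\alpha$ on $\Peps$ yields $\omega_2(\ueps)\downarrow 0$ with $|\lambda^\alpha(V) - \lambda^\alpha(\Vb)| \le \omega_2(\ueps)$ for all $V \in \Peps$. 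Setting $\delta_s(\ueps) := 2\max\{\omega_1(\ueps),\omega_2(\ueps)\}$, the hypothesis $|\lambda^\alpha(\Vb) - \xi| > \delta_s$ then forces $\xi \neq \hat\lambda^{\alpha,i}(V^-,V^+)$ for every $\alpha, i$ and every choice of $V^\pm$, so the RH alternative delivers $[V]=0$. Taking the supremum over all pairs of sequences yields $J(V;\xi)=0$, so $V$ is continuous at $\xi$; since $\xi \in I$ was arbitrary, $V$ is continuous on $I$.

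With continuity in hand, I would then verify the hypothesis of Theorem \ref{th:Vconst}: for any $\xi \in I$, the eigenvalues of $f_V(V(\xi))$ are exactly $\lambda^\alpha(V(\xi))$, which satisfy
\begin{equation*}
|\lambda^\alpha(V(\xi)) - \xi| \ge |\lambda^\alpha(\Vb) - \xi| - |\lambda^\alpha(V(\xi)) - \lambda^\alpha(\Vb)| > \delta_s - \omega_2(\ueps) \ge \delta_s/2 > 0.
\end{equation*}
Hence $\xi$ is not an eigenvalue of $f_V(V(\xi))$ for any $\xi \in I$, and Theorem \ref{th:Vconst} concludes that $V$ is constant on $I$.

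The main obstacle, and the reason this cannot be done purely with Theorem \ref{th:Vconst}, is the continuity step: without continuity we have no access to the non-divergence form, and in the multiplicity $p_\alpha \ge 2$ case the individual $\hat\lambda^{\alpha,i}$ are only continuous (not smooth) in $(V^-,V^+)$. The key technical point is that uniform continuity on the compact set $\Peps\times\Peps$, together with the identity $\hat\lambda^{\alpha,i}(V,V)=\lambda^\alpha(V)$ from the labeling of the $\alpha$-group, still gives a quantitative control $\omega_1(\ueps)\downarrow 0$ that is sufficient to exclude resonance between $\xi$ and any $\hat\lambda^{\alpha,i}$; this is what replaces the simple-eigenvalue smoothness argument of \cite{elling2012steady} in the non-strictly hyperbolic setting.
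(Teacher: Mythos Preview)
Your argument is correct. Note, however, that the paper does not actually supply a proof of this theorem: it is stated as a result from \cite{elling2012steady} and simply quoted, with the remark that the two theorems in this section ``rely on the existence of $\hat A$.'' So there is no proof in the present paper to compare against; your write-up is effectively a reconstruction of the argument from the cited reference, adapted to the non-strictly hyperbolic setting.

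That adaptation is exactly the right one. In \cite{elling2012steady} the averaged matrix \eqref{naive} has simple eigenvalues, so each $\hat\lambda^\alpha$ is smooth in $(V^-,V^+)$ and a Lipschitz bound gives $|\hat\lambda^\alpha(V^-,V^+)-\lambda^\alpha(\Vb)|=\bO(\ueps)$. Here the individual $\hat\lambda^{\alpha,i}$ are only continuous, but as you observe, uniform continuity on the compact set $\Peps\times\Peps$ together with the labeling $\hat\lambda^{\alpha,i}(V,V)=\lambda^\alpha(V)$ still yields a modulus $\omega_1(\ueps)\downarrow 0$, which is all that is needed to rule out the second alternative in \eqref{RH}. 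The reduction to Theorem~\ref{th:Vconst} via continuity of $V$ is then identical to the strictly hyperbolic case. One cosmetic point: you invoke compactness of $\Peps\times\Peps$, but since $\Peps$ itself shrinks with $\ueps$, what you are really using is continuity of $\hat\lambda^{\alpha,i}$ at the single point $(\Vb,\Vb)$, which is of course weaker and immediate.
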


Recall that under Assumption \ref{hyper}
\begin{equation}
P(0:1) = \det f^x_U(\Ub) \neq 0.
\end{equation}
Find some $\xi$ such that 
\begin{equation}
P(1:\xi) = \det (f^y_U(\Ub)-\xi f^x_U(\Ub)) \neq 0.
\end{equation}
Rotate coordinates so that $(1:\xi)$ becomes $(0:1)$ and $(0:1)$ becomes $(-1,\xi)$.  In these new coordinates, Assumption \ref{hyper} is satisfied, and so all results apply.  Since $P$ is invariant under rotation, in these new coordinates
\begin{equation}
P(-1, \xi) \neq 0,
\end{equation}
which means that $-\xi \neq \lambda^\alpha(\Vb)$ for all $\alpha = 1,.., n$.  Reduce $\epsilon$ so that $|-\xi - \lambda^\alpha(\Vb)| > \delta_s$ for all $\alpha$.  This will still be true on some small interval around $-\xi$, and so the previous theorem applies.  Rotate back to the original coordinates to deduce that $V$ must be constant in thin sectors containing the positive and negative $y$-axes.  Therefore, we lost no generality in only considering test functions supported away from $x=0$ and doing all the analysis in terms of $\xi$.

Now, construct $n$ intervals of the form
\begin{equation}
I^\alpha := \quad ]\lambda^\alpha(\Vb)-\delta_s, \lambda^\alpha(\Vb)+\delta_s[.
\end{equation}

Considering both $x>0$ and $x<0$, we now have $2n$ thin sectors centered at $\dfrac{y}{x} = \lambda^\alpha(\Vb)$ for some $\alpha$.  $V$ is constant outside these sectors, and we have 
\begin{equation}
\lim_{\xi \rightarrow \pm \infty \atop x > 0} V(\xi), \, \,  = \lim_{\xi \rightarrow \mp \infty \atop x<0} V(\xi).
\end{equation}

All interesting behavior occurs within these sectors, so we will consider the behavior of $V$ in these sectors individually.

\section{Linearly Degenerate Sectors}
\subsection{Satisfying the Jump Conditions}
We now analyze the possible behavior of $V$ in $I^\alpha$, where $\lambda^\alpha$ is linearly degenerate.

The first thing we recall is that if $V$ is discontinuous at $\xi$, then we can find a pair of subsequences whose limits yield
\begin{equation}
[f(V)] - \xi [V] = 0, \label{RH2}
\end{equation}
with $[V] \neq 0$.  We need to show that \eqref{RH2} is satisfied if and only if $V^\pm$ both lie on the same leaf of the foliation discussed earlier.

First, we notice that the eigenvalue $\lambda^\alpha$ is constant on each leaf of the foliation.  By direct calculation, we see that
\begin{align}
\frac{\partial}{\partial s^i} \Big(\lambda^{\alpha}(W^\alpha(V^-,s)) \Big)= \lambda_V^{\alpha}(W^\alpha(V^-,s)) W^\alpha_{s^i}(V^-,s)\equiv 0 \label{lambda-const-manfld}
\end{align}

for all $i \in \{1, ..., p_\alpha\}$ by linear degeneracy (s§ince $W^\alpha_{s^i} \in R^\alpha)$.  Therefore,
\begin{align}
s \mapsto \lambda^\alpha(W^\alpha(V^-,s)) \quad \mbox{is constant.}
\end{align}

We claim that for all $s$, if $V^+ = W^\alpha(V^-,s)$ and $\xi = \lambda^\alpha(V^-) =  \lambda^\alpha(V^+)$ then \eqref{RH2} is satisfied. Make these choices for $V^+$ and $\xi$ and consider
\begin{align}
F(s) := f\big(W^\alpha(V^-,s)\big)-f(V^-) - \lambda^\alpha\big(W^\alpha(V^-,s)\big)\big(W^\alpha(V^-,s)-V^-\big).
\end{align}

Notice $F(0)=0$, and (using \eqref{lambda-const-manfld})
\begin{align}
F_{s^i}(s) = f_V\big(W^\alpha(V^-,s)\big)W^\alpha_{s^i}(W^\alpha(V^-,s))-\lambda^\alpha\big(W^\alpha(V^-,s)\big) W^{\alpha}_{s_i}(V^-,s)\big) \equiv 0. 
\end{align}

Now we check for entropy admissibility.  Define
\begin{align}
E(s) := q\big(W^\alpha(V^-,s)\big)-q(V^-)-\lambda^\alpha\big(W^\alpha(V^-,s)\big)\Big(e\big(W^\alpha(V^-,s)\big)-e(V^-)\Big). 
\end{align}

Then $E(0)=0$ and
\begin{align}
E_{s^i}(s) &= q_V\big(W^\alpha(V^-,s)\big)W_{s^i}^\alpha(V^-,s)-\lambda^\alpha\big(W^\alpha(V^-,s)\big)e_V\big(W^\alpha(V^-,s)\big)W_{s^i}^\alpha(V^-,s)  \\
&= e_V\big(W^\alpha(V^-,s)\big) \Big(f_V\big(W^\alpha(V^-,s)\big)-\lambda^\alpha\big(W^\alpha(V^-,s)\big)I\Big)W_{s^i}^\alpha(V^-,s) \equiv 0,
\end{align}

(using the property of entropy-entropy flux pairs).  Therefore, this choice of $V^\pm$ and $\xi$ satisfies \eqref{everywherepointavg} for either $x>0$ or $x<0$.   This means that any two states on a leaf of the foliation can be the left and right sides of a \emph{contact discontinuity} located at $\xi$, if $\xi$ is the (constant) value of $\lambda^\alpha$ on that leaf.  To that end, we will sometimes refer to $W^\alpha(V^-,s)$ as the \emph{contact manifold} through $V^-$.  

We now use a theorem from \cite{freistuhler1991linear} to prove that these are the only choices that satisfy \eqref{RH2} for $\epsilon$ sufficiently small.

\begin{theorem}[Freist\"{u}hler \cite{freistuhler1991linear}]\label{Freist} For any $(V^-, V^+, \xi)$ in a sufficiently small neighborhood of $(\Vb, \Vb, \lambda^{\alpha}(\Vb))$, if $(V^-, V^+, \xi)$ satisfy the Rankine-Hugoniot jump condition then $V^-$ and $V^+$ must lie on the same leaf of the characteristic foliation, and $\xi = \lambda^{\alpha}(V^-) = \lambda^{\alpha}(V^+)$.
\end{theorem}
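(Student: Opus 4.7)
My plan is to combine the averaged-matrix reformulation of the Rankine--Hugoniot condition with a local implicit function theorem argument, identifying the nontrivial branch of the Hugoniot locus with the contact manifold through $V^-$.

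First, using the averaged matrix from Section~\ref{matravg}, the Rankine--Hugoniot identity is rewritten as $(\hat A(V^-, V^+) - \xi I)(V^+ - V^-) = 0$. If $V^+ = V^-$ the statement is trivial, so I assume $V^+ \neq V^-$; then $V^+ - V^-$ is an eigenvector of $\hat A(V^-, V^+)$ with eigenvalue $\xi$. Because $(V^-, V^+, \xi)$ is close to $(\Vb, \Vb, \lambda^\alpha(\Vb))$ and the non-$\alpha$ eigenvalue groups of $\hat A$ remain bounded away from $\xi$ by continuity, the restriction of $\hat A - \xi I$ to the complementary invariant subspace $(I - \hat P^\alpha(V^-, V^+))\R^m$ is invertible. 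Projecting RH onto this subspace, using that $\hat A$ commutes with $\hat P^\alpha$, yields
\[
F(V^+) := (I - \hat P^\alpha(V^-, V^+))(V^+ - V^-) = 0.
\]

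Next, I analyze $\{F = 0\}$ via the implicit function theorem. Differentiating shows $D F(V^-) = I - \hat P^\alpha(V^-, V^-)$, which is the spectral projection of $f_V(V^-)$ onto $\bigoplus_{\beta \neq \alpha} R^\beta(V^-)$, of rank $m - p_\alpha$ with kernel $R^\alpha(V^-)$. Since $\hat P^\alpha$ depends smoothly on $V^+$ via its contour representation, IFT gives that $\{F=0\}$ near $V^-$ is a smooth $p_\alpha$-dimensional submanifold $\mathcal{M}$ of $\Peps$ tangent to $R^\alpha(V^-)$ at $V^-$. By Boillat's Theorem~\ref{boillat}, the contact leaf $\{W^\alpha(V^-, s) : s \in \R^{p_\alpha}\}$ is also a smooth $p_\alpha$-dimensional submanifold through $V^-$ with the same tangent space $R^\alpha(V^-)$; and the computations $F(s) \equiv 0$, $E(s) \equiv 0$ preceding the theorem statement show each such pair satisfies RH with $\xi = \lambda^\alpha(V^-)$, so by the reduction above the leaf is contained in $\mathcal{M}$. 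Two smooth $p_\alpha$-dimensional submanifolds through a common point, one locally contained in the other, must coincide near that point, so $\mathcal{M}$ is the leaf and $V^+$ lies on it.

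Once $V^+ = W^\alpha(V^-, s)$ for some small $s$, linear degeneracy immediately gives $\lambda^\alpha(V^+) = \lambda^\alpha(V^-)$. Moreover, on the leaf we already have $(\hat A(V^-, V^+) - \lambda^\alpha(V^-) I)(V^+ - V^-) = 0$ from the $F(s) \equiv 0$ computation, while RH gives $(\hat A(V^-, V^+) - \xi I)(V^+ - V^-) = 0$; since $V^+ - V^- \neq 0$ is an eigenvector of the diagonalizable matrix $\hat A(V^-, V^+)$ its eigenvalue is unique, so $\xi = \lambda^\alpha(V^-) = \lambda^\alpha(V^+)$. The main obstacle is the implicit function theorem step: $F$ is implicit in $V^+$ because $\hat P^\alpha(V^-, V^+)$ itself depends on $V^+$, and one must verify that smoothness of the total projection (which in turn rests on spectral separation of the $\alpha$-group from the other groups, guaranteed by hyperbolicity with constant multiplicity and smallness of $\ueps$) gives a controlled derivative at the critical point. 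Once that is established, the dimension-and-tangent argument cleanly matches the local Hugoniot set with the leaf.
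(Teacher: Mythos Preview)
The paper does not supply its own proof of this statement; it is quoted as a result of Freist\"uhler and used as a black box. Your argument is therefore not being compared against a proof in the paper, and it stands on its own. It is correct.

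A few remarks on the one point you flag as the main obstacle. The map $F(V^+) = (I - \hat P^\alpha(V^-,V^+))(V^+ - V^-)$ is a map $\R^m \to \R^m$ whose derivative at $V^+ = V^-$ has rank $m - p_\alpha$, so the submersion theorem does not apply directly. The clean fix is exactly what the paper itself does in the Intermediate State lemma (Section~\ref{intstate}): compose with a fixed linear map $\pi:\R^m\to\R^{m-p_\alpha}$ that is an isomorphism on the range of $I - \hat P^\alpha(\Vb,\Vb)$. By continuity of the total projection, $\pi$ remains an isomorphism on the range of $I - \hat P^\alpha(V^-,V^+)$ for $(V^-,V^+)$ near $(\Vb,\Vb)$, so $\{F=0\} = \{\pi\circ F = 0\}$ locally, and $D(\pi\circ F)(V^-) = \pi\circ(I - P^\alpha(V^-))$ is surjective with kernel $R^\alpha(V^-)$. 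This yields the $p_\alpha$-dimensional manifold $\mathcal M$ you want.

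Your sandwich argument $\{\text{leaf}\}\subset\{\text{RH solutions}\}\subset\{F=0\}=\mathcal M$, combined with equality of dimensions, is the right way to close: even though $F(V^+)=0$ does not by itself force $V^+-V^-$ to be an eigenvector (the $\alpha$-group of $\hat A$ may split), the dimension match with the leaf forces all three sets to coincide near $V^-$. The uniqueness of the eigenvalue for the nonzero vector $V^+-V^-$ then pins down $\xi$.
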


We then have the following lemma.
\begin{lemma} \label{lem:contver}
For all $\xi \in I^\alpha$, $\xi \mapsto \lambda^\alpha\big(V(\xi)\big)$ is continuous.

In addition, if $\xi \neq \lambda^\alpha\big(V(\xi)\big)$ on some open interval in $I^\alpha$, then $V$ is constant on this interval.
\end{lemma}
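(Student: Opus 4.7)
The plan is to combine Freist\"uhler's theorem (Theorem \ref{Freist}) with the sequential Rankine--Hugoniot relation from \eqref{lrseqlim}, and to invoke Theorem \ref{th:Vconst} for the constancy claim.

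For the continuity of $\lambda^\alpha\circ V$, I fix $\xi_0\in I^\alpha$ and consider any pair $(\xi_k^-,\xi_k^+)$ with $\xi_k^-<\xi_k^+$, $\xi_k^\pm\to\xi_0$, and $V(\xi_k^\pm)\to V^\pm$. From \eqref{lrseqlim} the Rankine--Hugoniot relation $[f(V)]-\xi_0[V]=0$ holds. Because $\xi_0$ lies within $\delta_s$ of $\lambda^\alpha(\Vb)$ and $V^\pm$ are within $\epsilon$ of $\Vb$, Theorem \ref{Freist} applies (reducing $\ueps$ once more if needed) whenever $V^-\neq V^+$ and forces $V^\pm$ onto a common $\alpha$-leaf with $\lambda^\alpha(V^-)=\lambda^\alpha(V^+)=\xi_0$; the trivial case $V^-=V^+$ also gives $\lambda^\alpha(V^-)=\lambda^\alpha(V^+)$. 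To promote this equality-of-subsequential-limits to genuine continuity at $\xi_0$, I take any $\zeta_k\to\xi_0$ for which $V(\zeta_k)\to V^c$ subsequentially (compactness of $\Peps$), and pair it with the constant sequence $\equiv\xi_0$, after passing to a subsequence of $\zeta_k$ lying strictly on one side of $\xi_0$ to preserve $\xi_k^-<\xi_k^+$. This yields $V(\xi_0)$ as one of $V^\pm$ and $V^c$ as the other, so the argument above gives $\lambda^\alpha(V(\xi_0))=\lambda^\alpha(V^c)$, which is sequential continuity at $\xi_0$.

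For the second claim, let $J\subset I^\alpha$ be an open interval on which $\xi\neq\lambda^\alpha(V(\xi))$. I first show $V$ is continuous on $J$: suppose some $\xi_0\in J$ admits a pair $(\xi_k^-,\xi_k^+)$ producing $V^-\neq V^+$. Theorem \ref{Freist} then forces $\xi_0=\lambda^\alpha(V^-)$, and by the continuity of $\lambda^\alpha\circ V$ just established, $\lambda^\alpha(V^-)=\lambda^\alpha(V(\xi_0))$. Hence $\xi_0=\lambda^\alpha(V(\xi_0))$, contradicting the hypothesis on $J$. Next I verify that $\xi$ is not an eigenvalue of $f_V(V(\xi))$ for any $\xi\in J$. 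For the $\alpha$-family this is exactly the hypothesis. For $\beta\neq\alpha$, smoothness of $\lambda^\beta$ keeps $\lambda^\beta(V(\xi))$ within $O(\ueps)$ of $\lambda^\beta(\Vb)$, while $\xi\in I^\alpha$ stays within $\delta_s$ of $\lambda^\alpha(\Vb)$; since $\lambda^\alpha(\Vb)\neq\lambda^\beta(\Vb)$, choosing $\ueps$ small enough (so $\delta_s$ is also small) keeps $\xi$ uniformly separated from $\lambda^\beta(V(\xi))$ throughout $J$. Theorem \ref{th:Vconst} then delivers $V$ constant on $J$.

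The main technical subtlety is the upgrade from ``$\lambda^\alpha(V^-)=\lambda^\alpha(V^+)$ for every admissible sequence pair'' to genuine continuity of $\lambda^\alpha\circ V$ at $\xi_0$: this needs the constant-sequence trick to tie one subsequential limit to the fixed value $V(\xi_0)$, rather than only relating subsequential limits to each other. Once continuity of $\lambda^\alpha\circ V$ is available, the constancy statement follows from a routine combination of Freist\"uhler's theorem (to rule out jumps) and Theorem \ref{th:Vconst} (to convert absence of eigenvalue crossings into constancy), the only mild point being the spectral-gap estimate that separates $\xi\in I^\alpha$ from the families $\lambda^\beta$ with $\beta\neq\alpha$.
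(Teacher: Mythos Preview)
Your proof is correct and follows the paper's approach: both combine the sequential Rankine--Hugoniot relation \eqref{lrseqlim} with Freist\"uhler's theorem to obtain $[\lambda^\alpha(V)]=0$ for every admissible sequence pair (hence continuity, via the equivalence $J(g(V);\xi)=0\iff g\circ V$ continuous recorded in Section~11), and then invoke Theorem~\ref{th:Vconst} for the constancy claim. Your explicit constant-sequence trick and the spectral-gap check for $\beta\neq\alpha$ merely spell out details the paper leaves implicit.
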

\begin{proof}
Suppose $V$ is discontinuous at $\xi \in I^\alpha$.  Then we can find a pair of subsequences so that $[V] \neq 0$ and \eqref{RH2} is satisfied.  However, Theorem \ref{Freist} applies and so $V^\pm$ lie on the same leaf of the foliation, and so $[\lambda^\alpha(V)] = 0$.  This holds for any pair of subsequences, and so $\lambda\big(V(\xi)\big)$ is continuous at all $\xi \in I^\alpha$.  

Similarly, if $\xi \neq \lambda^\alpha\big(V(\xi)\big)$, then by Theorem \ref{Freist}, \eqref{RH2} cannot be satisfied for any pair of sequences unless $[V]=0$.  Therefore, $V$ itself must be continuous on such an open interval, and Theorem \ref{th:Vconst} shows it is constant.
\end{proof}
  
  \subsection{Intermediate State}\label{intstate}
  \begin{lemma}
  Let $W^{\alpha}(V^-,s)$ be as above.  For every $V^-, V \in \Peps$, there exists a unique 
  $s=s(V^-,V) \in B_\delta(0) \subset \R^{p_\alpha}$ such that 
  
  \begin{align}
  \hat{P}^\alpha \big(W^{\alpha}(V^-,s),V\big)\big(V-W^\alpha(V^-,s)\big) = 0, \label{zeroproj}
  \end{align}
  (for some $\delta > 0$).
  \end{lemma}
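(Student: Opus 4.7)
The plan is to apply the implicit function theorem to
\begin{equation*}
F(V^-,V,s) := \hat{P}^\alpha\bigl(W^\alpha(V^-,s),V\bigr)\bigl(V-W^\alpha(V^-,s)\bigr),
\end{equation*}
which is smooth by smoothness of $\hat{P}^\alpha$ and $W^\alpha$. Because $\hat{P}^\alpha$ is an idempotent, $F$ takes values in the smoothly varying $p_\alpha$-dimensional subspace $\hat{P}^\alpha\R^m$. To reduce \eqref{zeroproj} to a square system on $\R^{p_\alpha}$, I would contract $F$ against the fixed left eigenvectors at the background state and set $\tilde F_j(V^-,V,s) := \ell^{\alpha,j}(\Vb)\cdot F(V^-,V,s)$, giving $\tilde F:\Peps\times\Peps\times B_\delta(0)\to\R^{p_\alpha}$. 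Since $\hat{P}^\alpha(V^-,V)\to P^\alpha(\Vb)$ as $\epsilon\downarrow 0$ and the $\ell^{\alpha,j}(\Vb)$ form a basis of the dual to $P^\alpha(\Vb)\R^m$, the restriction of the map $x\mapsto(\ell^{\alpha,j}(\Vb)\cdot x)_j$ to $\hat{P}^\alpha(V^-,V)\R^m$ remains an isomorphism for $\epsilon$ small, so $\tilde F=0 \Leftrightarrow F=0$.

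Along the diagonal we have $\tilde F(V^-,V^-,0)=0$ since $V-W^\alpha(V^-,0)=0$. Next I would compute $\partial_s F$ at $(V^-,V^-,0)$: applying the product rule, every term in which $\partial_{s^i}$ hits $\hat{P}^\alpha$ carries a factor $V-W^\alpha(V^-,0)=0$ and hence vanishes, leaving
\begin{equation*}
\frac{\partial F}{\partial s^i}\bigg|_{V=V^-,s=0}
=-\hat{P}^\alpha(V^-,V^-)\,W^\alpha_{s^i}(V^-,0)
=-r^{\alpha,i}(V^-),
\end{equation*}
where the last equality uses \eqref{eq:charsub} together with the fact that $\hat{P}^\alpha(V^-,V^-)$ is the spectral projection onto $R^\alpha(V^-)$. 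Consequently the $p_\alpha\times p_\alpha$ Jacobian $\bigl[\partial\tilde F_j/\partial s^i\bigr]$ equals $-\bigl[\ell^{\alpha,j}(\Vb)\cdot r^{\alpha,i}(V^-)\bigr]$, which reduces to $-I$ at $V^-=\Vb$ by the biorthogonal normalization of eigenvectors and stays invertible with uniformly bounded inverse for $V^-,V$ in a sufficiently small $\Peps$.

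It then remains to promote the local IFT conclusion at $(\Vb,\Vb,0)$ to a statement uniform over $\Peps\times\Peps$. For this I would use the quantitative version: the bound $\tilde F(V^-,V,0)=O(|V-V^-|)=O(\epsilon)$ together with $\partial_s\tilde F=-I+O(\epsilon)$ lets one run a Banach fixed-point iteration for the map $s\mapsto s+\tilde F(V^-,V,s)$ on a ball $B_\delta(0)$ of radius $\delta=O(\epsilon)$, yielding existence and uniqueness in that ball for every $(V^-,V)\in\Peps\times\Peps$, after reducing $\epsilon$ once more if necessary. The main technical nuisance is the variation of the image of $\hat{P}^\alpha$ with its arguments, but this is exactly what the cancellation in $\partial_s F$ (above) circumvents; pulling back via the fixed dual basis $\ell^{\alpha,j}(\Vb)$ converts the remaining ambiguity into an $O(\epsilon)$ perturbation of the identity, which is harmless.
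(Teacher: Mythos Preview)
Your proposal is correct and follows essentially the same approach as the paper's own proof: both define the map $F$, reduce it to a $p_\alpha$-dimensional system (the paper by selecting $p_\alpha$ linearly independent rows of $\hat P^\alpha$, you by contracting with the fixed $l^{\alpha,j}(\Vb)$), compute $\partial_s F$ at the base point, and apply the implicit function theorem for $\epsilon$ small. One minor slip: \eqref{eq:charsub} only asserts that $\{W^\alpha_{s^i}(V^-,0)\}$ spans $R^\alpha(V^-)$, not that $W^\alpha_{s^i}(V^-,0)=r^{\alpha,i}(V^-)$, so your Jacobian at $V^-=\Vb$ is an invertible change-of-basis matrix between two bases of $R^\alpha(\Vb)$ rather than exactly $-I$ --- but this does not affect the argument.
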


  \begin{proof}
Recall that $\hat{P}^{\alpha}$ has rank $p_\alpha$, so we can view the map $\bold{F}$ defined as
\begin{align}
\bold{F}(V^-,s,V) := \hat{P}^\alpha \big(W^{\alpha}(V^-,s),V\big)\big(V-W^\alpha(V^-,s)\big)
\end{align}
mapping $\R^{p_\alpha+2m}$ to $\R^{p_\alpha}$. (More concretely, we may define $\bold{\tilde F}$ to simply be the $p_\alpha$ entries of $\bold{F}$ corresponding to the $p_\alpha$ linearly independent rows of $\hat{P}^\alpha$, which do not change on $\Peps$ since linear independence is an open condition and $\epsilon$ can be decreased.)  Then, 
\begin{align}
\bold{F}_s(\Vb,0, \Vb) = -\hat{P}^{\alpha}(\Vb,\Vb)\Big( W^{\alpha}_{s^1} (\Vb, 0) \big| ... \big| W^\alpha_{s^{p_\alpha}}(\Vb, 0) \Big).
\end{align}
However, recall that each $W^{\alpha}_{s^i}(\Vb,0)$ is an eigenvector of $f_V(\Vb)$, and therefore lies in the total eigenspace (which at $(\Vb,\Vb)$ is just the eigenspace) that $\hat{P}^\alpha$ is projecting onto.  Therefore,
\begin{align}
\bold{F}_s(\Vb,0, \Vb) = - \Big( W^{\alpha}_{s^1} (\Vb, 0) \big| ... \big| W^\alpha_{s^{p_\alpha}}(\Vb, 0) \Big).
\end{align}
By construction of the contact manifold, this has rank $p_\alpha$, since the span of the columns is precisely $R^\alpha(\Vb)$.  By the implicit function theorem we obtain $s(V^-,V)$ close to the origin satisfying $\bold{F}(V^-,s,V) = 0$ for $\epsilon$ sufficiently small.
  \end{proof}
  
  \subsection{Regularity of $\beta$ Components}
  The following lemma establishes the regularity of the total projections along the other groups $\beta \neq \alpha$.
  
  \begin{lemma}
  There exists a set $E$ of full measure in $I^{\alpha}$ such that for all $\xi_0 \in E$, $\xi \in I^{\alpha}$, $\beta \neq \alpha$, and $i=1,...,p_\beta$ we have
  
  \begin{equation}
  \begin{split} 
  \hat{P}^\beta\Big(W^\alpha\big(V(\xi_0),s(\xi)\big),V(\xi)\Big)&\Big(V(\xi)-W^\alpha\big(V(\xi_0),s(\xi)\big)\Big) \\
  &= o(|\xi-\xi_0|) + \bO\big(|V(\xi)-V(\xi_0)|\cdot|\xi-\xi_0|\big),
  \end{split} \label{eq:littleo}
  \end{equation}
  provided that $\xi_0 = \lambda^\alpha\big(V(\xi_0)\big)$, and defining $s(\xi) := s\big(V(\xi_0),V(\xi)\big)$ in the context of the previous lemma.
  
  \end{lemma}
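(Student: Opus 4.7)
The plan is to derive an algebraic identity for $V(\xi)-\tilde V$ (where $\tilde V := W^\alpha(V(\xi_0),s(\xi))$) from the pointwise conservation law \eqref{everywherepointV}, then project onto the $\beta$-invariant subspace of $\hat A$ and use the spectral separation between the $\alpha$- and $\beta$-groups to invert. The remaining integral on the right-hand side will naturally split into a $\bO(|V(\xi)-V(\xi_0)|\cdot|\xi-\xi_0|)$ piece and a Lebesgue-point remainder.

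First I would observe that, since $\tilde V$ and $V(\xi_0)$ lie on the same leaf of the characteristic foliation of $\lambda^\alpha$, the identities $E\equiv0$ and (especially) $F\equiv0$ derived earlier in this section give $f(\tilde V) - f(V(\xi_0)) = \lambda^\alpha(V(\xi_0))\,(\tilde V - V(\xi_0)) = \xi_0 (\tilde V - V(\xi_0))$, using the hypothesis $\xi_0=\lambda^\alpha(V(\xi_0))$. Equivalently, $f(\tilde V) - \xi_0 \tilde V = f(V(\xi_0)) - \xi_0 V(\xi_0)$. Substituting this into the first line of \eqref{everywherepointV} applied with $\xi_1=\xi_0,\ \xi_2=\xi$, and invoking Harten--Lax (Theorem \ref{HL}) $f(V(\xi))-f(\tilde V) = \hat A(\tilde V, V(\xi))(V(\xi)-\tilde V)$, a short rearrangement yields the key identity
\begin{equation*}
(\hat A(\tilde V, V(\xi)) - \xi_0 I)(V(\xi) - \tilde V) \;=\; \int_{\xi_0}^{\xi} \big(V(\xi) - V(\eta)\big)\,d\eta.
\end{equation*}

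Next I would apply $\hat P^\beta(\tilde V,V(\xi))$ to both sides. Using $\hat P^\beta \hat A = \hat A \hat P^\beta$, the left side becomes $(\hat A - \xi_0 I)\,\hat P^\beta(\tilde V,V(\xi))(V(\xi)-\tilde V)$. On the invariant subspace $\hat P^\beta \R^m$, the spectrum of $\hat A$ lies in the $\beta$-group and hence within $\bO(\ueps)$ of $\lambda^\beta(\Vb)$, while $\xi_0\in I^\alpha$ lies within $\delta_s$ of $\lambda^\alpha(\Vb)$. After choosing $\ueps$ and $\delta_s$ small enough to ensure a uniform spectral gap between these groups, the restriction $(\hat A-\xi_0 I)|_{\hat P^\beta\R^m}$ is invertible with uniformly bounded inverse; therefore
\begin{equation*}
\big|\hat P^\beta(\tilde V,V(\xi))(V(\xi)-\tilde V)\big| \;\leq\; C\left|\int_{\xi_0}^{\xi}\big(V(\xi)-V(\eta)\big)\,d\eta\right|.
\end{equation*}

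Finally I would split the integral as
\begin{equation*}
\int_{\xi_0}^{\xi}\big(V(\xi)-V(\eta)\big)\,d\eta \;=\; (\xi-\xi_0)\big(V(\xi)-V(\xi_0)\big) \;-\; \int_{\xi_0}^{\xi}\big(V(\eta)-V(\xi_0)\big)\,d\eta,
\end{equation*}
identifying the first summand as the $\bO(|V(\xi)-V(\xi_0)|\cdot|\xi-\xi_0|)$ contribution. Since $V \in L^\infty$ is in particular locally integrable, the Lebesgue differentiation theorem gives a full-measure set $E\subset I^\alpha$ of Lebesgue points of $V$, at which $\int_{\xi_0}^{\xi}|V(\eta)-V(\xi_0)|\,d\eta = o(|\xi-\xi_0|)$; this absorbs the second summand. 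The main (and really only) subtle point is the uniform invertibility of $(\hat A - \xi_0 I)|_{\hat P^\beta\R^m}$, which is why the construction of the averaged matrix and the total projections in Section \ref{matravg} is essential — once the spectral gap is in hand, the rest of the argument is routine.
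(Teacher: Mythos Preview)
Your proposal is correct and follows essentially the same route as the paper: both replace $V(\xi_0)$ by the intermediate state $\tilde V=W^\alpha(V(\xi_0),s(\xi))$ via the Rankine--Hugoniot identity $F\equiv0$, apply the averaged matrix to get $(\hat A(\tilde V,V(\xi))-\xi_0 I)(V(\xi)-\tilde V)=\int_{\xi_0}^{\xi}(V(\xi)-V(\eta))\,d\eta$, project by $\hat P^\beta$, invert on the $\beta$-invariant subspace using the spectral gap, and split the right-hand side into $(\xi-\xi_0)(V(\xi)-V(\xi_0))$ plus a Lebesgue-point remainder. The only cosmetic difference is the order in which the projection and the add--subtract of $V(\xi_0)$ are performed.
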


  \begin{proof}
  From \eqref{everywherepointavg}, we have
  
  \begin{align}
\Big(f\big(V(\xi)\big)-f\big(V(\xi_0)\big)\Big)-\xi_0 \big(V(\xi)-V(\xi_0)\big) &=   \int_{\xi_0}^{\xi} V(\xi)-V(\eta) d\eta. 
  \end{align}
  
  We claim that the left hand side is equal to
  
  \begin{align}
  \Big(f\big(V(\xi)\big)-f\big(W^\alpha\big(V(\xi_0),s(\xi)\big)\big)\Big)-\xi_0 \Big(V(\xi)-W^\alpha\big(V(\xi_0),s(\xi)\big)\Big). 
  \end{align}
  This is by design, since a contact at $\xi_0$ satisfies the Rankine-Hugoniot condition for the states $V(\xi_0)$ and $W^\alpha\big(V(\xi_0),s(\xi)\big)$.  For ease of reading, abbreviate $V(\xi_0) = V_0$, $V(\xi) = V$, and $W^\alpha\big(V^-,s(\xi)\big)$ as $W$.  Then
  \begin{align}
  \big(f(V)-f(V_0)\big) &- \xi_0 (V-V_0)  \\
  &= \big(f(V)-f(W)+f(W)-f(V_0)\big)-\xi_0(V - W + W - V_0)  \\
  &= \big(f(V)-f(W)\big) - \xi_0 ( V - W) + \big(f(W)-f(V_0)\big)-\xi_0(W-V_0)  \\
  &= \big(f(V)-f(W)\big) - \xi_0 ( V - W) + 0, 
  \end{align}
  since $\xi_0 = \lambda^{\alpha}(V_0)$ by assumption.  Then, using the averaged matrix we have
  \begin{align}
  \Big(\hat{A}(W,V)-\xi_0\Big)\big(V-W) = \int_{\xi_0}^{\xi} V-V(\eta) d\eta. 
  \end{align}
  Left multiply both sides by $\hat{P}^\beta(W,V)$ and add and subtract $V_0$ inside the integral on the right to obtain
  \begin{align}
 \hat{P}^\beta(W,V) &\Big(\hat{A}(W,V)-\xi_0\Big)\big(V-W) \\
  &=  \hat{P}^{\beta}(W,V) \left( \int_{\xi_0}^{\xi} V_0-V(\eta) d\eta + \big(V-V_0\big)(\xi-\xi_0) \right)
  \end{align}
  
  Lebesgue's differentiation theorem asserts that the integral on the right side is $o(|\xi-\xi_0|)$ for $\xi_0 \in E$, a set of full measure.  Using the properties of the total projection we obtain
    \begin{align}
\Big(\hat{A}(W,V)-\xi_0\Big)\hat{P}^\beta(W,V)(W-V) &=  o(|\xi-\xi_0|) + \bO\big(|V(\xi)-V(\xi_0)|\cdot|\xi-\xi_0|\big) 
  \end{align}
  However, since $\xi_0 \in I^\alpha$, and any eigenvalues in any $\beta$-group are thus uniformly bounded away from $\xi_0$, $(\hat{A}(W,V)-\xi_0)$ is uniformly non-degenerate on $\hat{P}^\beta \R^m$, and so
  \begin{align}
  \Big|\Big(\hat{A}(W,V)-\xi_0\Big)\hat{P}^\beta(W,V)(W-V)\Big| \geq   \delta \Big|\hat{P}^\beta(W,V)(W-V)\Big|,
  \end{align}
  for some $\delta > 0$, and the result follows.
  \end{proof}

  \subsection{Regularity of $V$ on Subsequences}
  The following lemma applies the main result of \cite{MR2921865} to obtain a subsequence on which $V(\xi)$ is Holder continuous, although for our purposes continuity would be enough.

  \begin{lemma}
  For almost every $\xi_0 \in E \subset I^\alpha$, there exists a subsequence $(\xi_n) \rightarrow \xi_0$ and $0 < C < \infty$ such that
  \begin{align}
  |V(\xi_n)-V(\xi_0)| \leq C|\xi_n-\xi_0|^{1/m}. \label{holder-m}
  \end{align}
  
  That is, for almost all $\xi_0 \in E \subset I^\alpha$ there exists a set $D := \{\xi_0\} \cup \left\{(\xi_n)\right\}$ such that $V|_D$ is Holder continuous with exponent $\frac{1}{m}$.
  \end{lemma}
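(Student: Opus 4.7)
The plan is to invoke the main theorem of Elling \cite{MR2921865} as a black box. That theorem is the $\R^p$-valued analogue of Saks's density theorem: for any bounded measurable $g:I\to\R^p$ on an interval $I$, there is a set $F\subset I$ of full Lebesgue measure such that for every $\xi_0\in F$ one can extract a sequence $(\xi_n)\subset I$ with $\xi_n\to\xi_0$ and
\[
 |g(\xi_n)-g(\xi_0)|\ \leq\ C_{\xi_0}\,|\xi_n-\xi_0|^{1/p},
\]
for some finite constant $C_{\xi_0}$ depending on $\xi_0$. The exponent $1/p$ is the one known to be optimal in this generality.

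First, I would apply this theorem directly to the bounded measurable function $V:I^\alpha\to\R^m$: it is bounded because $V$ takes values in the compact set $\Peps$, and its measurability follows from our fixed representative. The theorem then produces a set $F\subset I^\alpha$ of full measure on which the Hölder-$\tfrac{1}{m}$ estimate \eqref{holder-m} holds along some subsequence, with dimension parameter $p=m$.

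Second, I would intersect $F$ with the set $E\subset I^\alpha$ coming from the previous lemma. Since $E$ was shown to have full measure in $I^\alpha$ and $F$ has full measure in $I^\alpha$, the intersection $E\cap F$ is a subset of $E$ of full measure; each $\xi_0\in E\cap F$ satisfies the claim, which is exactly the meaning of ``almost every $\xi_0\in E$''. Setting $D:=\{\xi_0\}\cup\{\xi_n\}$ then realizes $V|_D$ as Hölder continuous at $\xi_0$ with exponent $1/m$.

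There is essentially no analytic obstacle to overcome here: the whole difficulty has already been packaged into the Elling theorem, which works precisely under the $L^\infty$ hypothesis we have — no BV, no continuity, and no differentiability assumption on $V$ is needed. The only point one should check is compatibility of measurability and the fact that the $p=m$ version of the theorem is available, neither of which is problematic. The real work lies later, where this subsequence Hölder control will be combined with the projection estimate \eqref{eq:littleo} and the contact-manifold parametrization $W^\alpha(V(\xi_0),s(\xi))$ to upgrade the weak regularity produced here into structural information about $V$ in the linearly degenerate sector $I^\alpha$.
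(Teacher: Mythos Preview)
Your proposal is correct and follows essentially the same approach as the paper: the paper's proof is literally the one-line statement that since $V$ maps $\R$ into $\R^m$, the claim follows directly from \cite{MR2921865}. Your additional remarks about intersecting with $E$ and checking boundedness are sound elaborations, though note that the cited theorem actually holds for \emph{arbitrary} functions $F:\R\supset A\to\R^p$ (no boundedness or measurability required), so those checks are not even needed.
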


  \begin{proof}
  As $V(\xi)$ is a function from $\R$ to $\R^m$, the result follows directly from \cite{MR2921865}.
  \end{proof}

  \begin{lemma}
  For $(\xi_n) \rightarrow \xi_0$ as above, we have the following estimate for all $\beta \neq \alpha$: 
  \begin{align}
 \hat{P}^\beta\Big(W^\alpha\big(V(\xi_0),s(\xi_n)\big),V(\xi_n)\Big)\Big(V(\xi_n)-W^\alpha\big(V(\xi_0),s(\xi_n)\big)\Big) &=  o(|\xi_n-\xi_0|). \label{beta-lo}
  \end{align}
  
  \end{lemma}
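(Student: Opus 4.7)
The proof should be almost immediate by combining the two immediately preceding lemmas, so the plan is essentially just to assemble the two estimates with the correct arithmetic on the exponents.

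First, I would note that the hypothesis ``$(\xi_n) \to \xi_0$ as above'' means $\xi_0$ is taken from the almost-full-measure subset of $E$ on which the Hölder-subsequence lemma applies; in particular $\xi_0 \in E$, so the previous lemma on the regularity of the $\beta$-components is available at this $\xi_0$. I would apply that lemma with $\xi = \xi_n$, which yields
\begin{equation*}
\hat{P}^\beta\Big(W^\alpha\big(V(\xi_0),s(\xi_n)\big),V(\xi_n)\Big)\Big(V(\xi_n)-W^\alpha\big(V(\xi_0),s(\xi_n)\big)\Big) = o(|\xi_n-\xi_0|) + \bO\big(|V(\xi_n)-V(\xi_0)|\cdot|\xi_n-\xi_0|\big).
\end{equation*}

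Next, I would absorb the big-$\bO$ remainder into the little-$o$ using the Hölder estimate \eqref{holder-m} on the subsequence: since $|V(\xi_n)-V(\xi_0)|\leq C|\xi_n-\xi_0|^{1/m}$, the error term satisfies
\begin{equation*}
\bO\big(|V(\xi_n)-V(\xi_0)|\cdot|\xi_n-\xi_0|\big) = \bO\big(|\xi_n-\xi_0|^{1+1/m}\big) = o(|\xi_n-\xi_0|),
\end{equation*}
because $1+1/m > 1$. Adding this to the existing $o(|\xi_n-\xi_0|)$ term gives the claim.

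There is really no obstacle here; the whole point of extracting the Hölder subsequence in the previous lemma was precisely to enable this absorption. The only step worth stating with care is the observation that the set of $\xi_0$ for which both the $o(\cdot)+\bO(\cdot)$ decomposition and the Hölder bound along a sequence hold simultaneously is still of full measure in $I^\alpha$, being the intersection of two such sets; so the conclusion holds at almost every $\xi_0 \in E$, matching the hypothesis ``as above.''
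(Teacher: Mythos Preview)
Your proof is correct and follows essentially the same approach as the paper. The only minor difference is that the paper observes that mere continuity of $V|_D$ (i.e., $|V(\xi_n)-V(\xi_0)|=o(1)$) already suffices to turn the $\bO\big(|V(\xi_n)-V(\xi_0)|\cdot|\xi_n-\xi_0|\big)$ term into $o(|\xi_n-\xi_0|)$, whereas you invoke the sharper H\"older bound $|V(\xi_n)-V(\xi_0)|\leq C|\xi_n-\xi_0|^{1/m}$; either way the conclusion is immediate.
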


  \begin{proof}
  The previous lemma proves that $V|_D$ is continuous, and so $|V(\xi_n)-V(\xi_0)|$ is $o(1)$ and the result follows immediately from \eqref{eq:littleo}.\\
  \end{proof}

  \begin{lemma}
  In the setting of the previous lemmas, 
  \begin{align}
  \Big(V(\xi)-W^\alpha\big(V(\xi_0),s(\xi_n)\big)\Big) &=  o(|\xi_n-\xi_0|).
  \end{align}
  \end{lemma}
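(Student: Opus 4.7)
The plan is to decompose the vector $V(\xi_n) - W^\alpha(V(\xi_0),s(\xi_n))$ using the total projections $\hat{P}^\beta$ evaluated at the pair $(W^\alpha(V(\xi_0),s(\xi_n)),V(\xi_n))$, which sum to the identity. The claim is then that each component is $o(|\xi_n-\xi_0|)$ — the $\alpha$-component for a trivial reason, and the $\beta$-components for $\beta\neq\alpha$ by the estimate just proved.

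Concretely, I would start by writing
\begin{equation*}
V(\xi_n) - W^\alpha\big(V(\xi_0),s(\xi_n)\big) = \sum_{\beta=1}^{n}\hat{P}^\beta\Big(W^\alpha\big(V(\xi_0),s(\xi_n)\big),V(\xi_n)\Big)\Big(V(\xi_n) - W^\alpha\big(V(\xi_0),s(\xi_n)\big)\Big),
\end{equation*}
which uses only $\sum_\beta \hat{P}^\beta = I$. The $\beta=\alpha$ term is zero: by construction of the intermediate state $s(\xi_n)=s(V(\xi_0),V(\xi_n))$ from the previous subsection, equation \eqref{zeroproj} says exactly that $\hat{P}^\alpha(W^\alpha(V(\xi_0),s(\xi_n)),V(\xi_n))(V(\xi_n)-W^\alpha(V(\xi_0),s(\xi_n))) = 0$. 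For each $\beta\neq\alpha$, the estimate \eqref{beta-lo} of the preceding lemma gives exactly $o(|\xi_n-\xi_0|)$.

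Summing a finite number (namely $n-1$) of $o(|\xi_n-\xi_0|)$ terms gives $o(|\xi_n-\xi_0|)$, so the triangle inequality yields the claim. There is no real obstacle here: the work has all been done in constructing $s$ so that the $\alpha$-component vanishes, and in the previous lemma which controls the transverse components on the subsequence. The present lemma is essentially a bookkeeping step that assembles those two facts via the completeness relation for the total projections.
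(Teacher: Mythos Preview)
Your proposal is correct and follows exactly the same approach as the paper's own proof: decompose via $\sum_\beta \hat P^\beta = I$, kill the $\alpha$-term by the defining property \eqref{zeroproj} of $s(\xi_n)$, and bound the remaining $\beta\neq\alpha$ terms by the preceding lemma \eqref{beta-lo}. There is nothing to add.
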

  \begin{proof}
  We have
  \begin{align}
   & \Big(V(\xi_n)-W^\alpha\big(V(\xi_0),s(\xi_n)\big)\Big) \\ &= \sum_{\beta}  \hat{P}^\beta\Big(W^\alpha\big(V(\xi_0),s(\xi_n)\big),V(\xi_n)\Big) \Big(V(\xi_n)-W^\alpha\big(V(\xi_0),s(\xi_n)\big)\Big) \\
    &=  \sum_{\beta\neq\alpha}  \hat{P}^\beta\Big(W^\alpha\big(V(\xi_0),s(\xi_n)\big),V(\xi_n)\Big) \Big(V(\xi_n)-W^\alpha\big(V(\xi_0),s(\xi_n)\big)\Big) \\ &=o(|\xi_n-\xi_0|),
  \end{align}
  due to the clever choice of $s(\xi_n)$.
  \end{proof}
  \subsection{Existence of at Most One Contact}
  We now combine the results of the previous three sections to obtain the main result.
  \begin{theorem}
    \label{th:lindeg}
    On a linearly degenerate sector, 
    $V$ is either constant, or constant on each side of a single contact discontinuity.
\end{theorem}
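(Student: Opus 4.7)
The plan is to study the closed set
\[
C := \{\,\xi \in I^\alpha : \xi = \lambda^\alpha(V(\xi))\,\},
\]
which is closed relative to $I^\alpha$ by Lemma \ref{lem:contver}, and to show that $C$ is either empty or a single point. Combined with the second half of Lemma \ref{lem:contver}, which asserts that $V$ is constant on each connected component of $I^\alpha \setminus C$, this directly yields the theorem.

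The core step, and the main obstacle, is to prove that $C$ has empty interior. I would argue by contradiction: suppose $C$ contains a nondegenerate subinterval $J$, and choose an interior point $\xi_0$ of $J$ that also lies in the full-measure subset of $I^\alpha$ provided by the preceding subsequential regularity lemmas. Then there exists a sequence $\xi_n \to \xi_0$ with
\[
V(\xi_n) - W^\alpha\bigl(V(\xi_0),\,s(\xi_n)\bigr) = o(|\xi_n - \xi_0|).
\]
For $n$ sufficiently large, $\xi_n \in J \subset C$, so $\lambda^\alpha(V(\xi_n)) = \xi_n$. On the other hand, linear degeneracy (equation \eqref{lambda-const-manfld}) makes $\lambda^\alpha$ constant along the contact manifold through $V(\xi_0)$, with value $\lambda^\alpha(V(\xi_0)) = \xi_0$. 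Using Lipschitz continuity of $\lambda^\alpha$ on $\Peps$ with some constant $L$,
\[
|\xi_n - \xi_0| = \bigl|\lambda^\alpha(V(\xi_n)) - \lambda^\alpha\bigl(W^\alpha(V(\xi_0),s(\xi_n))\bigr)\bigr| \leq L\cdot o(|\xi_n - \xi_0|),
\]
which is impossible for large $n$. The subtlety here is exactly the reason for the preceding intermediate-state construction and the sequential H\"older estimate inherited from \cite{MR2921865}: the purely $L^\infty$ setting forbids taking derivatives directly, so differentiation along a cleverly chosen sequence has to serve as the substitute which detects the linear-degeneracy contradiction --- the same obstruction that in the strictly hyperbolic case was handled by Saks' scalar theorem.

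Next I would rule out bounded components of $I^\alpha \setminus C$. If $(p_1,p_2)$ were such a component with $p_1 < p_2$ and $p_1, p_2 \in C$, then $V$ would be constant on it, so $\lambda^\alpha \circ V$ would be constant there too; but continuity at each endpoint forces this constant to equal both $p_1$ and $p_2$, yielding $p_1 = p_2$, a contradiction. Combining the two preceding facts, $C$ is closed in $I^\alpha$ with empty interior and no bounded complementary component, which forces $C$ to be either $\emptyset$ or a single interior point $\{\xi_\ast\}$. In the first case $V$ is constant throughout $I^\alpha$. In the second case $V$ takes constant values $V^-$ and $V^+$ on the two sides of $\xi_\ast$; continuity of $\lambda^\alpha\circ V$ forces $\lambda^\alpha(V^-) = \xi_\ast = \lambda^\alpha(V^+)$, and Theorem \ref{Freist} then places $V^\pm$ on a common leaf of the characteristic foliation, so if $V^- \neq V^+$ this is exactly one admissible contact discontinuity, and otherwise $V$ is again globally constant, as claimed.
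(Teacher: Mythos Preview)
Your argument is correct and follows essentially the same route as the paper's proof: the paper calls your set $C$ by the name $F$, first shows (via the same continuity-of-$\lambda^\alpha\circ V$ argument you use in your ``no bounded complementary component'' step) that $F$ is a closed interval, and then derives the same contradiction $\xi_n-\xi_0=o(|\xi_n-\xi_0|)$ from the intermediate-state estimate to force that interval to be a point. The only difference is the order in which you run the two steps, and your closing remark invoking Theorem~\ref{Freist} to confirm the jump is a contact is a nice explicit touch that the paper leaves implicit.
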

\begin{proof}
    By Lemma \ref{lem:contver},
    $F:=\{\xi\in I^\alpha~|~\xi=\lambda^\alpha(V(\xi))\}$ is closed and
    $V$ is constant on $I^\alpha\backslash F$.
    
    Assume there are $\xi_1,\xi_2\in F$ and $\eta\in I^\alpha \setminus F$ with $\xi_1<\eta<\xi_2$.
    Then we can choose a maximal $]\eta^-,\eta^+[$ containing $\eta$ but not meeting $F$.
    Necessarily $\eta^\pm\in F$, so $\eta^+=\lambda^\alpha(V(\eta^+))$ and $\eta^-=\lambda^\alpha(V(\eta^-))$.
    But $V$ is constant on $]\eta^-,\eta^+[$, so $\eta^+=\eta^-$, which is a contradiction.

    Hence $F$ must be a closed interval.

    Assume $F$ has positive length, and pick $\xi_0 \in F$ with $F \supset (\xi_n) \rightarrow \xi_0$ such that \eqref{holder-m} implies \eqref{beta-lo}.  Then,
    \begin{align}
    \lambda^\alpha\big(V(\xi_n)\big) &= \lambda^\alpha\Big(W^\alpha\big(V(\xi_0),s(\xi_n)\big)\Big) + \bO\Big(\Big|V(\xi_n)-W^\alpha\big(V(\xi_0),s(\xi_n)\big)\Big|\Big) \\
    &= \lambda^\alpha\big(V(\xi_0)\big) + \bO\Big(\Big|V(\xi_n)-W^\alpha\big(V(\xi_0),s(\xi_n)\big)\Big|\Big) \\
    &= \xi_0 + o(|\xi_n-\xi_0|).
    \end{align}
    However, since $\xi_n \in F$, this implies
    \begin{align}
    \xi_n - \xi_0 = o(|\xi_n-\xi_0|),
    \end{align}
    a contradiction.
    \newline Thus $F$ must be a point, or empty (which can be ruled out, but this is unnecessary).
\end{proof}

\section{Genuinely Nonlinear Sectors and Global SBV Regularity}
Outside linearly degenerate sectors, the results in \cite{elling2012steady} apply with the same proofs as in the strictly hyperbolic case, and so we have the following (see \cite{elling2012steady} for detailed discussions of shocks and simple waves).

\begin{theorem}\label{structure} Under Assumptions 1-5, and supposing that $f^x_U(\Ub)$ has only positive eigenvalues, we have:
\noindent (a) $V$ must be constant outside of $2m$ thin sectors centered at the roots of \eqref{det2}, which we can group as $m$ forward $(x > 0)$ and $m$ backward $(x<0)$ sectors.  \newline
(b) Linearly degenerate sectors each contain at most one contact discontinuity.\\
(c) Genuinely nonlinear forward sectors each contain at most one shock or simple wave.  \newline
(d) Genuinely nonlinear backward sectors can each contain infinitely many shocks and simple waves, but there cannot be consecutive simple waves.  \newline
(e) Each shock wave has a neighborhood on each side on which $U$ is constant, and the size of the neighborhood is lower bounded proportionally to shock strength. \\
(f) The width of the sectors and constant of proportionality depend only on the system and $\epsilon$.
\end{theorem}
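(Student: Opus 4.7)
The plan is to assemble the theorem from structural results already proved in the paper together with their strictly hyperbolic analogues in \cite{elling2012steady}. The crucial observation is that of the six statements, only (b) is genuinely new: (a) is the global sector decomposition, while (c)--(f) concern behavior in a sector whose governing eigenvalue is simple, so the analysis of \cite{elling2012steady} transfers essentially verbatim.

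For (a), I would invoke Theorem \ref{th:Uconst2} together with the rotation argument at the end of Section \ref{sect}. The theorem forces $V$ to be constant on any interval of $\xi$ bounded away from every $\lambda^\alpha(\Vb)$ by $\delta_s$, and the rotation argument extends this constancy across the line $x=0$, legitimizing the single-variable description $V=V(\xi)$ on each half-plane. Pairing each interval $I^\alpha=\,]\lambda^\alpha(\Vb)-\delta_s,\lambda^\alpha(\Vb)+\delta_s[\,$ with either choice of half-plane yields $2n$ thin sectors; since $\sum_\alpha p_\alpha = m$, this is equivalently $2m$ sectors counted with multiplicity, matching the statement. Part (b) is Theorem \ref{th:lindeg}, which I would simply quote.

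For (c)--(e), inside a genuinely nonlinear sector Boillat's theorem (Theorem \ref{boillat}) forces $p_\alpha=1$. Consequently the total projection $\hat{P}^\alpha$ from Section \ref{matravg} reduces to the rank-one spectral projection onto a smoothly varying eigenvector $\hat r^\alpha(V^\pm)$, $\hat A$ splits smoothly across its invariant subspaces, and the Rankine-Hugoniot constraint from \eqref{RH} collapses to $[V]\parallel \hat r^\alpha$. The Lax $E$-condition, the monotone-rearrangement estimate bounding shock neighborhoods proportionally to shock strength, and the impossibility of two consecutive simple waves in a forward sector depend only on: simplicity of the governing eigenvalue, uniform separation of all other $\beta$-groups from $\xi\in I^\alpha$, smoothness of $\hat A$, and strict convexity of $e$ (available via Lemma \ref{econvex} from the positivity hypothesis on the eigenvalues of $f^x_U(\Ub)$). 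Each input is in place, so the corresponding arguments of \cite{elling2012steady} port over with only notational changes; the asymmetry between forward and backward sectors in (c) versus (d) comes from the opposite signs of the entropy inequality in \eqref{everywherepointavg}. Part (f) follows by inspecting the $\epsilon$-dependence of $\delta_s$ (from Theorem \ref{th:Uconst2}) and of the shock-neighborhood constant (from the quantitative Liu-type estimate in \cite{elling2012steady}), both of which depend only on the system data and on $\epsilon$.

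The main obstacle, such as it is, lies not in any new analysis but in verifying in (c)--(e) that the presence of \emph{other} $\beta$-groups with $p_\beta\geq 2$ does not interfere with the strictly hyperbolic proofs inside a simple-eigenvalue sector. This is exactly what the Kato total-projection framework of Section \ref{matravg} buys us: even when some $\beta$-groups have non-constant internal multiplicity so that individual eigenvectors fail to be smooth, the $\alpha$-block of $\hat A$ is smooth and scalar, and the $\beta$-blocks enter only through their eigenvalues, which remain uniformly separated from $\xi\in I^\alpha$. Once this separation is recorded, the theorem follows by assembly and no further estimates are required.
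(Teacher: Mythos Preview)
Your proposal is correct and follows essentially the same approach as the paper: quote Theorem~\ref{th:lindeg} for (b), and obtain (a) and (c)--(f) from the strictly hyperbolic analysis of \cite{elling2012steady}. In fact you give considerably more justification than the paper does (it dispatches the remaining statements in a single sentence), and your explicit check that the total-projection framework of Section~\ref{matravg} insulates a simple-eigenvalue sector from other $\beta$-groups with $p_\beta\geq 2$ is a useful clarification that the paper leaves implicit.
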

\begin{rem}
If $f^x_U(\Ub)$ has only negative eigenvalues, then the ``forward sectors'' (named since they behave like \emph{forward-in-time} Riemann problem solutions, in which at most one wave from each family appears) lie in $x<0$ while the ``backward sectors'' lie in $x>0$.
\end{rem}
\begin{rem}
If $e$ is convex, we have shown that Assumptions 1-5 show that the solution in $x>0$ must be the unique Lax solution\footnote{Lax's solution is a forward-in-time self-similar solution for sufficiently close Riemann states consisting of up to $n$ waves interspersed with constant states, and is unique among solutions having this structure.  See \cite{serre1999systems}, Section 4.6 for details.} to the (one-spatial dimension $y$ with $x$ functioning as time) Riemann problem with data prescribed on the $y$-axis, and if $e$ is concave, then this is true in $x<0$.  Note that uniqueness is not expected in the half-plane corresponding to the backward-in-time solution, but our analysis shows the solution will still be of bounded variation.  Since \eqref{weakformV} are the same equations that a self-similar (function of $\xi = x/t$) solution to a one-dimensional system of conservation laws, our regularity results (and the uniqueness implied by our structural results and the uniqueness of Lax's Riemann solution ) apply to one-dimensional systems.  We have therefore extended uniqueness from the class of self-similar solutions consisting of up to $n$ waves with constant states in between to the class consisting of small self-similar $L^\infty$ perturbations of a constant state.  This extends an earlier result of Heibig \cite{heibig1990regularite}, which considered forward-in-time Riemann solutions to one-dimensional strictly hyperbolic \emph{genuinely nonlinear} systems, and so our analysis can treat systems that can have repeated linearly degenerate eigenvalues.
\end{rem}
\begin{proof}
The statement regarding degenerate sectors follows from Theorem \ref{th:lindeg}, and the remaining statements follow directly from \cite{elling2012steady}.
\end{proof}
In addition, the regularity results contained in \cite{elling2012steady} also apply, now that we have at most one contact in degenerate sectors.
\begin{theorem}
Suppose $V$ satisfies the hypotheses of the previous theorem.  Then $V$ is of bounded variation.  More specifically, $V = V_S + V_L$ where $V_S$ is a saltus (jump) function of bounded variation, and $V_L$ is Lipschitz, and so $V$ is a special function of bounded variation.  The total variation of $V_S$ and the Lipschitz constant of $V_L$ are independent of $V$ and only depend on the system and $\ueps.$
\end{theorem}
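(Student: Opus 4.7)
The strategy is to exploit the detailed structure given by Theorem \ref{structure} and sum up, sector by sector, the contributions to the saltus and Lipschitz parts, obtaining uniform bounds in terms of the system and $\ueps$ alone. Since $V$ is constant outside the $2m$ sectors, each of width at most $2\delta_s$, the entire analysis reduces to a finite number of sector problems of uniformly bounded width, plus the trivial behavior outside.

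First I would dispose of the easy sectors. In a linearly degenerate sector, Theorem \ref{th:lindeg} produces at most one contact discontinuity, contributing one jump of size bounded by $2\ueps$ to $V_S$ and nothing to $V_L$. In a forward genuinely nonlinear sector there is at most one shock (a single jump) or one simple wave (a Lipschitz arc, since on a rarefaction $V_\xi = r^\alpha/(\lambda^\alpha_V r^\alpha)$ is uniformly bounded on $\Peps$ by genuine nonlinearity). Each such sector therefore contributes a universally bounded total variation and a universally bounded Lipschitz constant.

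The main work is the backward genuinely nonlinear sector, which may contain countably many shocks and simple waves, subject only to the constraint that no two simple waves are adjacent. Here the essential tool is clause (e) of Theorem \ref{structure}: each shock of strength $s_k$ has constant-state neighborhoods of length at least $c\, s_k$ on each side, and these neighborhoods are pairwise disjoint and fit inside a sector of width at most $2\delta_s$, so $\sum_k s_k \leq 2\delta_s/c$. Hence the shock strengths are summable with a uniform bound, allowing one to define $V_S$ as the saltus function with jumps at the shock locations equal to the shock jumps, with total variation controlled uniformly. The simple-wave intervals, being separated by shocks (or by constant regions across which $\xi$ is not an eigenvalue) and individually Lipschitz with a universal constant, combine with the constant regions to give a candidate $V_L := V - V_S$.

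The delicate point, and I expect the main obstacle, is to verify that the formal subtraction $V_L = V - V_S$ really is Lipschitz (not merely continuous) across the possibly accumulating shock locations. The idea is that $V_L$ is \emph{constant} on a full neighborhood of each shock point: on those neighborhoods $V$ is constant by (e), and the jump has been absorbed into $V_S$. Together with the uniform Lipschitz bound on simple-wave pieces and constancy on the rest, this yields a globally Lipschitz $V_L$ inside each sector and, after gluing through the constant regions outside all sectors, globally on $\R$. Summing the uniform bounds over the $2m$ sectors gives the claimed bounds on $\operatorname{TV}(V_S)$ and $\operatorname{Lip}(V_L)$ in terms of the system and $\ueps$ only, and hence $V \in \SBV$ with the stated decomposition.
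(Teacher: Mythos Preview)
Your proposal is correct and is essentially the argument the paper has in mind. Note, however, that the paper's own proof is a one-line deferral: it observes that Theorem \ref{th:lindeg} handles the linearly degenerate sectors and then cites \cite{elling2012steady} for everything else. What you have written is a faithful reconstruction of the sector-by-sector decomposition and the summability-via-disjoint-neighborhoods argument that is carried out in \cite{elling2012steady}; the paper does not reproduce any of those details here. So your approach and the paper's approach coincide, with the only difference being that you spell out the content of the citation while the paper does not.
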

\begin{proof}
Again, now that linearly degenerate sectors have been treated by Theorem \ref{th:lindeg}, and the analysis in \cite{elling2012steady} works the same everywhere else, the claim follows.
\end{proof}

\section{Comparison to the Strictly Hyperbolic Case}\label{stricthyp}
Though much of the analysis is similar to the strictly hyperbolic case treated in \cite{elling2012steady}, there are some crucial differences.  

As discussed in Section \ref{matravg}, the construction of the averaged matrix $\hat{A}(V^\pm)$ is less straightforward in this non-strictly hyperbolic case and relies on convexity of $e(\cdot)=\psi^x\big((f^x)^{-1}(\cdot))$.  In \cite{elling2012steady}, this assumption was not needed to construct the averaged matrix (the more straightforward definition \eqref{naive} could be used), it was only necessary that
\begin{equation}
e_{VV}r^\alpha r^\alpha \neq 0
\end{equation}
for each $\alpha$ corresponding to a genuinely nonlinear field.  This quantity cannot vanish as long as $f^x_U$ is non-degenerate, but if $f^x_U$ has eigenvalues with different signs then this quantity will have different signs for different fields.  This was needed to identify which part of the Hugoniot locus corresponded to admissible shocks for genuinely nonlinear fields, and this ultimately determined whether the ``forward'' and ``backward'' sectors corresponded to $x>0$ or $x<0$.  Therefore, there are some strictly hyperbolic systems and background states that can be treated by \cite{elling2012steady} for which neither $x>0$ or $x<0$ (or any half-plane) is analogous to a forward-in-time Riemann problem solution to a one-dimensional system of conservation laws (recall that forward-in-time Riemann problem solutions consist of at most one wave from each family --- which is the behavior described in Theorem \ref{structure} for forward sectors).  However, since in the non-strictly hyperbolic case we need the eigenvalues of $f^x_U(\Ub)$ to all have the same sign in order to construct $\hat{A}$, the systems and background states treated in this paper have the property that either $x>0$ or $x<0$ will correspond to a forward-in-time Riemann problem solution.    

More importantly, the entire approach for linearly degenerate sectors is completely different in this non-strictly hyperbolic case.  In the strictly hyperbolic case in \cite{elling2012steady}, the situation was similar to that in Theorem \ref{th:lindeg}:
\begin{align}
\lambda^\alpha\big(V(\xi)\big) &= \xi \mbox{ on a closed interval } F \subset I^\alpha, \mbox{ and } \label{resonant} \\
\hat{P}^\beta\big(V(\xi_0),V(\xi)\big) \big(V(\xi)-V(\xi_0)\big) &= \bO(|\xi-\xi_0|) \mbox{ for all } \beta \neq \alpha. \label{blip}
\end{align}
The approach was then to use a result due to Saks \cite{saks1937theory}: for \emph{any} function $F: \R \supset A \rightarrow \R$, the set of points $x$ at which 
\begin{equation}
\liminf_{h \rightarrow 0+}\frac{|F(x+h)-F(x)|}{h} =  \infty
\end{equation}
is of measure zero.   This implies differentiability at almost every $\xb \in F$ after restriction to a subsequence converging to $\xi_0$ of the real valued function
\begin{equation}
\xi \mapsto l^\alpha(\Vb)\big(V(\xi)-V(\xi_0)).
\end{equation}
Taking further and further subsequences for the $m-1$ other (Lipschitz from \eqref{blip}) components allowed for the construction of a sequence $(\xi_n) \rightarrow \xb$ so that
\begin{equation}
\lim_{n \rightarrow \infty} \frac{V(\xi_n)-V(\xb)}{\xi_n - \xb} =: V'
\end{equation}
exists and is finite.  It was then shown that
\begin{equation}
\Big(f_V\big(V(\xb)\big)-\xb I \Big) V' = 0,
\end{equation}
implying $V' \parallel r^\alpha\big(V(\xb)\big)$, contradicting
\begin{equation}
\lambda_V\big(V(\xb)\big) V' = 1,
\end{equation}
(which follows from \eqref{resonant}), due to linear degeneracy.  

However, for a linearly degenerate eigenvalue of multiplicity $p_\alpha > 1$, \eqref{blip} only guarantees $m-p_\alpha$ Lipschitz components, and there are $p_\alpha$ components that we have no regularity information about (indeed at a contact with fixed $V^-$, there is a $p_\alpha$-dimensional submanifold of allowable $V^+$ to which a jump discontinuity can occur).  Therefore, for the previous argument to work we need differentiability after restriction of a function taking values in $\R^{p_\alpha}$.  Saks's theorem does not apply, and functions resembling space filling curves show explicit counterexamples.  In \cite{MR2921865}, the first author proved an analogous result --- for \emph{any} function $F: \R^k \supset A \rightarrow \R^p$, the set of points $x$ at which
\begin{equation}
\liminf_{|h|\rightarrow 0} \frac{|F(x+h)-F(x)|}{|h|^{k/p}} = \infty
\end{equation}
is of measure zero.  This paper also presents an explicit example demonstrating the sharpness of the exponent $k/p$, and so we cannot hope to obtain differentiability after restriction for an arbitrary function $F: \R \supset A \rightarrow \R^{p_\alpha}$ if $p_\alpha > 1$.  Therefore, the different approach of constructing the ``intermediate state'' in Section \ref{intstate} is necessary.

\appendix

\section{Regularity of Eigenvalues and Eigenvectors} \label{evaluesmooth}
The standard implicit function theorem argument for smoothness of simple eigenvalues does not work if there are repeated eigenvalues.  (We will use the convention that when the eigenvalues are indexed with a subscript they are repeated according to their multiplicity, where superscript indices only label the distinct eigenvalues.)  It is well known (see \cite{serre2010matrices}) that the unordered set of eigenvalues (repeated according to multiplicity) of an $m \times m$ matrix is a continuous function of the matrix entries, with the spectrum being an element of $\C^m \setminus \sim$, where
$\left\{ \lambda_\alpha \right\}_{\alpha=1}^m \sim \left\{ \mu_\alpha \right\}_{\alpha=1}^m$ if $\lambda_\alpha = \mu_{\sigma(\alpha)}$ for all $\alpha=1..m$ and some $\sigma \in S_n$.  The metric on this quotient space is given by
\begin{align}
d\left( \left\{\lambda_\alpha \right\}, \left\{ \mu_\alpha \right\} \right) = \min_{\sigma \in S_n} \max_{1 \leq \alpha \leq m} | \mu_\alpha - \lambda_{\sigma(\alpha)} |.
\end{align}
However, if the matrices in question are continuous functions of say $z \in D \subset \R^k$ such that the eigenvalues are real for all $z \in D$, then it is clear we can label

\begin{align}
\lambda_1(z) \leq \lambda_2(z) \leq ... \leq \lambda_m(z)
\end{align}
such that $\lambda_\alpha(z)$ is a continuous function of $z$ for $\alpha= 1, .., m$.

The smoothness of the eigenvalues and eigenvectors is more delicate when the eigenvalues are not simple --- in fact there are many counterexamples.  However, if the matrices $A(z)$ have the property that each distinct $\lambda^\alpha(z)$ has constant algebraic multiplicity $p_\alpha$ and $p_\alpha$ linearly independent eigenvectors for all $z \in D$, then around each $z_0 \in D$ there exists a neighborhood $D_{z_0} \ni z_0$ such that, for $\alpha,\beta=1,..,n, i = 1,..,p_\alpha,j=1,..,p_\beta$,
\begin{align}
&\lambda^\alpha(z) : D \rightarrow \R \\
&r^{\alpha,i}(z): D_{z_0} \rightarrow \R^m \\
&l^{\alpha,i}(z): D_{z_0} \rightarrow \R^m
\end{align}
are smooth functions satisfying for all $z \in D_{z_0}$
\begin{align}
A(z)r^{\alpha,i}(z) &= \lambda^{\alpha}(z) r^{\alpha,i}(z), \\
l^{\alpha,i}(z) A(z) &= l^{\alpha,i}(z) \lambda^{\alpha}(z), \\
l^{\alpha,i}(z)r^{\beta,j}(z)&=\delta_{\alpha \beta}\delta_{ij}, \label{normalization} \\
|r^{\alpha,i}(z)| & = 1.
\end{align}
Moreover, the set of right (and left) eigenvectors is linearly independent for all $z$, and for each given family the right eigenvectors can be taken to be orthonormal.  A proof of these statements for a single semisimple eigenvalue of constant multiplicity can be found in \cite{nomizu1973characteristic}.
\begin{theorem}[Nomizu \cite{nomizu1973characteristic}] Let $D \subset \R^k$ be open and $A: D \rightarrow \mathbb{M}_{m}(\R)$ be a smooth mapping such that $A(z)$ is diagonalizable for all $z$.  If $\lambda$ is a continuous function on $D$ such that for every $z \in D$ the value $\lambda(z)$ is an eigenvalue of $A(z)$ with the common multiplicity $p$, then $\lambda$ is smooth.  Furthermore, for each $z_0$, there exist smooth eigenvectors $r^1, ..., r^p$ of a neighborhood $D_{z_0}$ of $z_0$ into $\R^m$ such that, for each $z \in D_{z_0}$, $r^1(z), ..., r^p(z)$ form an orthonormal basis of the eigenspace of $A(z)$ for  $\lambda(z)$.
\end{theorem}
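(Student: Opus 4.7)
\medskip
\noindent\textbf{Proof proposal.}
The plan is to proceed via the Riesz (spectral) projection and a smooth Gram--Schmidt, both of which are visible already in the $\hat P^\alpha$ construction used earlier in the paper. Fix $z_0\in D$. Since $A(z_0)$ is diagonalizable and $\lambda(z_0)$ has multiplicity $p$, the remaining eigenvalues of $A(z_0)$ are finitely many complex numbers distinct from $\lambda(z_0)$. Choose a small positively oriented circle $\Gamma\subset\mathbb{C}$ around $\lambda(z_0)$ that encloses $\lambda(z_0)$ and no other eigenvalue of $A(z_0)$. By the continuity of the spectrum of $A(z)$ in $z$ (the metric on $\mathbb{C}^m/\sim$ recalled just above) together with the continuity of $\lambda$ itself, there is an open neighborhood $D_{z_0}\subset D$ of $z_0$ on which $\Gamma$ still separates $\lambda(z)$ from every other eigenvalue of $A(z)$. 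On $D_{z_0}$ define
\[
P(z) := \frac{1}{2\pi i}\oint_\Gamma (wI-A(z))^{-1}\,dw.
\]
Since the integrand is jointly smooth in $(w,z)$ on $\Gamma\times D_{z_0}$ (the resolvent exists because $\Gamma$ avoids the spectrum), $P$ is a smooth matrix-valued function on $D_{z_0}$.

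Next I use the diagonalizability hypothesis together with the constant-multiplicity hypothesis to identify $P(z)$ as the spectral projection onto $\ker(A(z)-\lambda(z)I)$ of constant rank $p$. Because $A(z)$ is diagonalizable, each eigenvalue is semisimple, so the standard functional calculus shows that $P(z)$ is the projection whose range is the direct sum of eigenspaces for eigenvalues of $A(z)$ inside $\Gamma$; by choice of $\Gamma$ and by the constant multiplicity of $\lambda$, that direct sum is precisely $\ker(A(z)-\lambda(z)I)$, and $\operatorname{tr} P(z)=p$. The intertwining identity $A(z)P(z)=P(z)A(z)$ and the fact that $A(z)$ acts as $\lambda(z)I$ on $\operatorname{range} P(z)$ give
\[
A(z)P(z) = \lambda(z)\,P(z), \qquad \operatorname{tr}\!\bigl(A(z)P(z)\bigr) = p\,\lambda(z).
\]
Since the left-hand side is smooth in $z$, so is $\lambda(z)=\frac{1}{p}\operatorname{tr}(A(z)P(z))$. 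This already establishes the first assertion on all of $D$, because $z_0\in D$ was arbitrary.

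For the eigenvectors, pick any orthonormal basis $r^1(z_0),\dots,r^p(z_0)$ of $\ker(A(z_0)-\lambda(z_0)I)$ and define the smooth candidates $\tilde r^i(z):=P(z)r^i(z_0)$, $i=1,\dots,p$. These lie in $\operatorname{range}P(z)=\ker(A(z)-\lambda(z)I)$, and at $z=z_0$ they reduce to the chosen orthonormal basis. The Gram determinant $z\mapsto\det\bigl[\langle\tilde r^i(z),\tilde r^j(z)\rangle\bigr]_{i,j=1}^p$ is smooth and equals $1$ at $z_0$, hence is nonzero on a possibly smaller neighborhood, which I still call $D_{z_0}$; on that neighborhood $\tilde r^1(z),\dots,\tilde r^p(z)$ remain linearly independent. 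Applying the Gram--Schmidt process, which is a rational (hence smooth) operation as long as the iterated Gram determinants stay positive, I obtain an orthonormal frame $r^1(z),\dots,r^p(z)$ of $\ker(A(z)-\lambda(z)I)$ that depends smoothly on $z\in D_{z_0}$, proving the second assertion.

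The one step that requires genuine care, and which I expect to be the main obstacle, is the identification $\operatorname{range}P(z)=\ker(A(z)-\lambda(z)I)$ of constant rank $p$ \emph{throughout} $D_{z_0}$: a priori, $\Gamma$ could enclose several eigenvalues of $A(z)$ for $z$ near $z_0$, and only the combined algebraic multiplicities must equal $p$ by continuity of $P$. Here the diagonalizability assumption is essential---it rules out Jordan blocks, so the range of $P(z)$ decomposes as a direct sum of eigenspaces; and the continuous function $\lambda$ of constant multiplicity $p$, together with the fact that the eigenvalues enclosed by $\Gamma$ total $p$ counted with multiplicity, forces all enclosed eigenvalues to coincide with $\lambda(z)$. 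Without \emph{both} hypotheses (diagonalizability and constant multiplicity of the continuous selection $\lambda$) this step fails, as the counterexamples alluded to in the paper show.
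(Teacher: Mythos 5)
Your proof is correct and complete. Note, however, that the paper does not actually prove this statement --- it is quoted from Nomizu's article and the proof is deferred entirely to that reference --- so there is no in-paper argument to compare against. Your route via the Riesz projection $P(z)=\frac{1}{2\pi i}\oint_\Gamma (wI-A(z))^{-1}\,dw$ is the standard one (essentially Kato's), and it is also exactly the mechanism the paper itself deploys for the total projections $\hat P^\alpha$ of the averaged matrix, so it sits naturally in context. You use the two hypotheses in the right places: continuity of the spectrum gives that the total algebraic multiplicity enclosed by $\Gamma$ stays equal to $p$, constant multiplicity of the continuous selection $\lambda$ then forces $\lambda(z)$ to be the \emph{only} enclosed eigenvalue, and diagonalizability identifies $\operatorname{range}P(z)$ with the genuine (not generalized) eigenspace $\ker(A(z)-\lambda(z)I)$; from there $\lambda(z)=\tfrac1p\operatorname{tr}\bigl(A(z)P(z)\bigr)$ is smooth and $P(z)r^i(z_0)$ furnishes a smooth local frame to which Gram--Schmidt applies. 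The only cosmetic quibble is that Gram--Schmidt is not a rational operation (normalization involves square roots), but it is smooth wherever the iterated Gram determinants remain positive, which is all you need on the shrunken neighborhood.
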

Apply this theorem to each eigenvalue, and take the left eigenvectors to be the rows of the inverse matrix of the matrix of right eigenvectors to obtain the normalization \eqref{normalization}.

 \section{Calculations for Full Euler}\label{eul}
The full Euler equations are given by
\begin{align}
f^x(U) = \left( \begin{array}{c} m \\ m^2\rho^{-1} + p \\ mn\rho^{-1} \\ \frac{1}{2}m(m^2+n^2)\rho^{-2}+me+mp\rho^{-1} \end{array} \right), f^y(U) = \left( \begin{array}{c} n \\ mn\rho^{-1} \\ n^2\rho^{-1}+ p \\ \frac{1}{2}n(m^2+n^2)\rho^{-2}+ne+np\rho^{-1} \end{array} \right),
\end{align}
where $\rho$ is the density, $m$ and $n$ are the horizontal and vertical momentum densities, $p$ is the pressure, and $e$ is the specific internal energy.  The conserved quantities are $\rho$, $m$, $n$, and the total energy per unit volume $\rho E := \frac{1}{2\rho}(m^2+n^2)+\rho e$.  For convenience, we can calculate the eigenvalues and check for genuine nonlinearity/linear degeneracy by considering the matrix pair $(f^x_W,f^y_W)$ with convenient variables $W = (\rho, m, n, S)$ where $S$ is the specific entropy (the invariance of eigenvalues and nonlinearity properties was discussed at the end of Section  \ref{gennonlindeg}).  It is well known (see \cite{MR0029615}) that of the five thermodynamic quantities $\rho$, $e$, $S$, $p$ and temperature $T$, only two are independent, so once two have been chosen the other three can be expressed in terms of those two.  We have that
\begin{align}
p_\rho(\rho, S) &=: c^2 > 0, \\
e_\rho(\rho,S) &= \frac{p}{\rho^2},\\
e_S(\rho,S) &= T,
\end{align}
where $c$ is the \emph{sound speed}.  The derivatives of the fluxes in terms of the variables $W$ are then
\begin{align}
f^x_W &= \left( \begin{array}{cccc} 0 & 1 & 0 & 0 \\ -m^2\rho^{-2}+c^2 & 2m\rho^{-1} & 0 & p_S \\ -mn\rho^{-2} & n\rho^{-1} & m\rho^{-1} &0 \\ -m(m^2+n^2)\rho^{-3}+mc^2\rho^{-1} & \frac{3}{2}m^2\rho^{-2}+\frac{1}{2}n^2\rho^{-2}+e+p\rho^{-1} & mn\rho^{-2} & mT + mp_S\rho^{-1} \end{array} \right), \\
f^y_W &= \left( \begin{array}{cccc} 0 & 0 & 1 & 0 \\ -mn\rho^{-2} & n\rho^{-1} & m\rho^{-1} & 0 \\ -n^2 \rho^{-2} + c^2 & 0 & 2n\rho^{-1} & p_S \\ -n(m^2+n^2)\rho^{-3} + nc^2\rho^{-1} & mn\rho^{-2} & \frac{3}{2} n^2 \rho^{-2}+\frac{1}{2}m^2\rho^{-2}+e+p\rho^{-1} & nT + np_S\rho^{-1} \end{array} \right).
\end{align}
Using Maple, we can compute the eigenvalues and eigenvectors.   There are two simple eigenvalues corresponding to acoustic waves:
\begin{align}
\lambda^\pm(U) = \frac{mn \pm \rho c \sqrt{m^2 + n^2 - (\rho c)^2}}{m^2-(\rho c)^2},
\end{align}
and a double eigenvalue corresponding to shear waves and entropy jumps:
\begin{equation}
\lambda^0(U) = \frac{n}{m}.
\end{equation}

The background state we consider will be of the form $\Ub = (1, M_0, 0, E_0)$, where the units have been scaled so that the background density and sound speed are 1, and so the background \emph{Mach Number} $|M_0|$ is the absolute value of the background horizontal momentum.  So that these eigenvalues are real and distinct, we must assume $|M_0| > 1$, which is simply requiring the background state to be \emph{supersonic}, and it is clear that the eigenvalues and eigenvectors are smooth functions on a neighborhood of this $\Ub$.  The eigenvectors considered below are also smooth on a neighborhood of $\Ub$.

Two linearly independent eigenvectors for $\lambda^0$ are
\begin{align}
r^{0,1}(U) = \left(\begin{array}{c} 0 \\ m \\ n \\ 0 \end{array} \right), \qquad r^{0,2}(U) = \left( \begin{array}{c} -p_S \\ 0 \\ 0 \\ c^2 \end{array}\right).
\end{align}
As $\lambda^0$ is semisimple of constant multiplicity, Boillat's theorem (Theorem \ref{boillat}) guarantees that $\lambda^0$ is linearly degenerate, though an easy calculation confirms this as well.  For any contact discontinuity corresponding to $\lambda^0$, the flow will be tangential to the discontinuity.  The $r^{0,1}$ field corresponds to a \emph{shear wave}, which is a discontinuity in the tangential velocity, and the $r^{0,2}$ field corresponds to an \emph{entropy wave}, in which the entropy and density are different but in a manner so that the pressure is constant on either side.  

The acoustic characteristic fields $\lambda^\pm$ are genuinely nonlinear --- to confirm this it suffices to check at the background state with $n=0$ since genuine nonlinearity is an open condition.  From Maple we have that
\begin{align}
r^\pm(\Ub) = \left( \begin{array}{c}  \pm M_0  \\
        \pm (M_0^2-1) \\
        \sqrt{M_0^2-1} \\ 0
    \end{array}\right). 
\end{align}
However, the first three entries are precisely the $r^\pm$ eigenvectors for the isentropic case investigated in \cite{elling2012steady}, $\lambda^\pm$ are identical in the full and isentropic cases, and the first three $W$ variables we consider in this case are precisely the conserved quantities for the isentropic case.  Since $\lambda^\pm$ for full Euler are independent of $S$, it is clear that in our context of full Euler $\lambda^\pm_W r^\pm$ is identical to $\lambda^\pm_U r^\pm$ from the isentropic case in Section 18 of \cite{elling2012steady}.  Genuinely nonlinearity was demonstrated for $c_\rho > -1$, which is true for commonly used equations of state (in particular, any polytropic gas with $\gamma > 1$.)

As discussed in Chapter II, Section 1.1 of \cite{godlewski1996numerical}, taking $\eta(U) = -\rho S$, $\psi^x(U) = -m S$, $\psi^y(U) = -n S$  yields an entropy-entropy flux pair.  The second law of thermodynamics implies that $S$ is a strictly concave function of $\rho^{-1}$ and $e$, which \cite{godlewski1996numerical} proves is equivalent to $-\rho S$ being a strictly convex function of $\rho, m, n,$ and $\rho E$.

Finally, in Chapter II, Section 2 of \cite{godlewski1996numerical}, it is shown that the eigenvalues of $f^x_U$ are
\begin{align}
\frac{m}{\rho} \pm c, \frac{m}{\rho}, \frac{m}{\rho}.
\end{align}
(A change of coordinates was introduced that preserves the eigenvalues, since the actual expression for $f^x_U$ is complicated due to the need of differentiating with respect to $\frac{m^2+n^2}{2\rho}+\rho e$.)  Therefore, if $M_0 > 1$, $e_{VV}$ is positive definite (by Lemma \ref{econvex}) and the system is hyperbolic in the positive $x$-direction and forward sectors lie in $x>0$.  If $M_0 < -1$, $e_{VV}$ is negative definite and the system is hyperbolic in the negative $x$-direction.

\bibliographystyle{chicago}
\bibliography{biblio}

\end{document}